\newcommand{\N}{\mathbb N}
\newcommand{\Z}{\mathbb Z}
\newcommand{\R}{\mathbb R}
\newcommand{\T}{\mathbb{T}}
\def\R{\mathbb R}
\def\N{\mathbb N}
\def\Z{\mathbb Z}
\def\E{\mathbb E}
\def\ep{\epsilon}
\def\rg{\rangle} 
\def\lg{\langle}
\newcommand{\be}{\begin{equation}}
\newcommand{\ee}{\end{equation}}
\def\1{{\bf 1}}
\def\rife#1{(\ref{#1})}
\def\inte{\int_{\T^d}}
\def\de{\delta}
\def\vep{\varepsilon}
\def\pelle#1{L^{#1}((0,T)\times \T^d)}
\def\elle#1{L^{#1}(\T^d)}
\def\intq{\int_0^T\int_{\T^d}}
\def\limitate#1{L^\infty((0,T); \elle{#1})}
\def\ds{\displaystyle}
\def\eps{\epsilon}
\newcommand{\s}[1]{{\mathcal #1}}
\newcommand{\bb}[1]{{\mathbb #1}}
\newtheorem{Theorem}{Theorem}[section]
\newtheorem{Definition}[Theorem]{Definition}
\newtheorem{Proposition}[Theorem]{Proposition}
\newtheorem{Lemma}[Theorem]{Lemma}
\newtheorem{Remark}[Theorem]{Remark}
\begin{document}

\title{Second order mean field games with degenerate diffusion and local coupling}
\author{Pierre Cardaliaguet}
\address{Ceremade, Universit\'e Paris-Dauphine,
Place du Mar\'echal de Lattre de Tassigny, 75775 Paris cedex 16 - France}
\email{cardaliaguet@ceremade.dauphine.fr }
\author{P. Jameson Graber}
\address{828, Boulevard des Mar\'echaux, 91762 Palaiseau Cedex}
\email{jameson.graber@ensta-paristech.fr}
\author{Alessio Porretta}
\address{Dipartimento di Matematica, Universit\`a di Roma ``Tor Vergata'',
Via della Ricerca Scientifica 1, 00133 Roma (Italy)}
\email{porretta@mat.uniroma2.it}
\author{Daniela Tonon}
\address{Ceremade, Universit\'e Paris-Dauphine,
Place du Mar\'echal de Lattre de Tassigny, 75775 Paris cedex 16 - France}
\email{tonon@ceremade.dauphine.fr}
\email{ }

\dedicatory{Version: \today}

\maketitle

\begin{abstract} 
We analyze a (possibly degenerate)    second order mean field games system of partial differential equations.
The distinguishing features of the model considered are (1) that it is not uniformly parabolic,  including the first order case as a possibility, and (2) the coupling is a local operator on the density. As a result we look for weak, not smooth, solutions.
Our main result is the existence and uniqueness of suitably defined weak solutions, which are characterized as minimizers of two optimal control problems.
We also show that such solutions are stable with respect to the data, so that in particular the degenerate case can be approximated by a uniformly parabolic (viscous) perturbation.
\end{abstract}
%\tableofcontents

\section*{Introduction}

This paper is devoted to the analysis of  second order  mean field games systems with a local coupling. The general form of these systems is: 
\be\label{MFG}
\left\{\begin{array}{cl}
(i)&- \partial_t \phi -A_{ij}\partial_{ij}\phi+H(x,D\phi) =f(x,m(x,t))\\
(ii) & \partial_t m-\partial_{ij}( A_{ij}m)-{\rm div} (mD_pH(x,D\phi))=0\\
(iii)& m(0)=m_0, \; \phi(x,T)=\phi_T(x)
\end{array}\right.
\ee
where $A:\R^d\to \R^{d\times d}$ is symmetric and nonnegative, the Hamiltonian $H:\R^d\times \R^d\to\R$ is convex in the second variable, the coupling
$f:\R^d\times [0,+\infty)\to [0,+\infty)$ is increasing with respect to the second variable, $m_0$ is a probability density and $\phi_T:\R^d\to\R$ is a given function.  The functions $H$ and $f$, and  the matrix $A$, could as well depend on time, but since this does not give any  additional difficulty, we will avoid it just to simplify notations.

Mean field game systems (MFG systems) have been introduced simultaneously by Lasry-Lions \cite{LL06cr1, LL06cr2, LL07mf, LLperso} and Huang-Caines-Malham\'e \cite{HCMieeeAC06} to describe Nash equilibria in differential games with infinitely many players. The first unknown $\phi=\phi(t,x)$ is the value function of an optimal control problem of a typical small player. In this control problem, the dynamics is given by the controlled stochastic differential equation
$$
dX_s= v_s ds + \Sigma(X_s)dB_s,
$$
where $(v_s)$ is the control, $(B_s)$ is a Brownian motion and $\Sigma\Sigma^T=A$. The cost is given by 
$$
\E\left[ \int_0^T H^*(X_s, -v_s)+ f(X_s, m(s,X_s))\ ds+ \phi_T(X_T)\right]
$$
For each time $t\in [0,T]$ the quantity $m(t,x)$ denotes the density of population of small players at position $x$. In the control problem the term involving $f$ formalizes the fact that the cost of the player depends on this density $m$. As $\phi$ is the value function of this control problem, the optimal control of a typical small player is formally given by the feedback $(t,x)\to -D_pH(x,D\phi(t,x))$. Hence 
the second equation \eqref{MFG}-(ii) is the Kolmogorov equation of the process $(X_s)$ when the small player  plays in an optimal way. By the  mean field approach, this equation also describes the evolution of the whole population density as all players play in an optimal way. 

MFG systems with uniformly parabolic diffusions---typically $A_{ij}\partial_{ij}\phi=\Delta \phi$---have been the object of several contributions, either by PDE methods (see, e.g., \cite{CLLP, LL06cr1, LL06cr2, LL07mf, LLperso, GPSM1, GPSM2, Po}) or by stochastic techniques (see, e.g., \cite{CaDe2, HCMieeeAC06}): in this setting one often expects the solutions to be smooth, at least if the coupling is nonlocal and regularizing or if it has a ``small growth". The case of local couplings with an arbitrary growth has been discussed in \cite{CLLP} for purely quadratic hamiltonians (i.e. $H=|D\phi|^2$), in which case solutions are proved to be smooth,   and in \cite{Po} for general hamiltonians, by proving existence and uniqueness of weak solutions. 

Here we concentrate on degenerate parabolic equations. In this case the usual fixed point techniques used to prove the existence of solutions in the uniformly parabolic setting break down by lack of regularity. One then has to rely on convex optimization methods: this idea, which goes back to the analysis of some optimal transport problems (see \cite{bb,CCN}), has already been used to study first order MFG systems (i.e., $A\equiv 0$): see \cite{c1, cg, Gra14}. However it was not clear in these papers wether the weak solution was stable with respect to viscous approximation, i.e., if we could obtain weak solutions of the first order MFG systems by passing to the limit in uniformly parabolic ones.  This issue has partially motivated our study. 

In this paper we show the existence and uniqueness of  a weak solution for the degenerate mean field game system \eqref{MFG} as well as  the stability of solutions with respect to perturbation of the data: this includes of course stability by viscous approximation.
 
Concerning existence and uniqueness of solutions, the paper improves the existing results in two directions. First we consider non uniformly parabolic second order MFG systems, which have never been considered before. The introduction of second order derivatives induces several issues: in particular, in contrast with the first order equations, we do not expect the function $\phi$ to be BV (as in \cite{cg,Gra14}), which obliges us to be very careful about trace properties. Secondly---and this is new even for first order MFG systems---we drop a restriction between the growth condition of $H$ and the growth condition of $f$, restriction which was mandatory in the previous papers: see \cite{c1, cg}. To overcome the difficulty, we provide new  {\it integral estimates} for  subsolutions of Hamilton-Jacobi equations with unbounded right-hand side (Theorems \ref{int-reg} and  \ref{thm:estiHJ}). We think that these results are of independent interest. 

With these estimates in hand, the structure of proof for the existence and uniqueness follows roughly the lines already developed in \cite{c1, cg, CCN, Gra14}: basically it amounts to show that the MFG system can be viewed as an optimality condition for two convex problems, the first one being an optimal control of Hamilton-Jacobi equation, the second one an optimal control problem for the Fokker-Planck equation (see section \ref{sec:2opti} for details). A byproduct of this approach is  the stability of weak solutions with respect to the data (Theorem \ref{thm:stab}), which can be obtained by $\Gamma-$convergence techniques.

The paper is organized as follows. First  we introduce the notation and assumptions  needed throughout the paper (section \ref{sec:notass}). Then (section \ref{sec:esti})  we give our new estimates for subsolutions of Hamilton-Jacobi equations with a superlinear growth in the gradient variable and an unbounded right-hand side. In section \ref{sec:2opti}, we introduce the two optimal control problems and show that they are in duality while in section \ref{sec:optiHJ} we show that the optimal control problem for the Hamilton-Jacobi equation has a ``relaxed solution." Section \ref{sec:exuniq} is devoted to the analysis of the MFG system (existence, uniqueness and characterization). In the last section we discuss the stability of solutions. 

\bigskip

{\bf Acknowledgement: } 
This work has been partially supported by the Commission of the
European Communities under the 7-th Framework Programme Marie
Curie Initial Training Networks   Project SADCO,
FP7-PEOPLE-2010-ITN, No 264735,  by the French National Research Agency
 ANR-10-BLAN 0112 and ANR-12-BS01-0008-01 and by the Italian Indam Gnampa project 2013 \lq\lq Modelli di campo medio nelle dinamiche di popolazioni e giochi differenziali\rq\rq.

%%%%%%%%%%%%%%%%%%%%%%%%%%%%%%%%%%%%%%%%%%
\section{Notations and assumptions}\label{sec:notass}

\noindent {\bf Notations :} We denote by 
 $\lg x, y\rg$ the Euclidean scalar product  of two vectors $x,y\in\R^d$ and by
$|x|$ the Euclidean norm of $x$. We use conventions on repeated indices: for instance, if $a,b\in \R^d$, we often write $a_ib_i$ for the scalar product $\lg a,b\rg$. More generally, if $A$ and $B$ are two square symmetric matrices of size $d\times d$, we write $A_{ij}B_{ij}$ for ${\rm Tr}(AB)$.  

%For any $x_0\in\R^d$ and $r>0$, we denote by $B_r(x_0)$ the open ball of radius $r$, centered at $x_0\in\R^d$, and we set $B_r=B_r(0)$. We use a similar notation $B_r(t_0,x_0)$ for a ball  of $\R^{d+1}$ centered at $(t_0,x_0)\in \R\times \R^d$ and of radius $r>0$.  

To avoid further difficulties arising from boundary issues, we work in the flat $d-$dimen\-sional torus $\T^d=\R^d\backslash \Z^d$. We denote by $P(\T^d)$ the set of Borel probability measures over $\T^d$. It is endowed with the weak convergence. For $k,n\in\N$ and $T>0$, we denote by ${\mathcal C}^k([0,T]\times \T^d, \R^n)$ the space of maps $\phi=\phi(t,x)$ of class ${\mathcal C}^k$ in time and space with values in $\R^n$. For $p\in [1,\infty]$ and $T>0$, we denote by $L^p(\T^d)$ and $L^p((0,T)\times \T^d)$ the set of $p-$integrable maps over $\T^d$ and $[0,T]\times \T^d$ respectively. We often abbreviate $L^p(\T^d)$ and $L^p((0,T)\times \T^d)$ into  $L^p$. We denote by $\|f\|_p$ the $L^p-$norm of a map $f\in L^p$. \\

\noindent {\bf Assumptions:} We now collect the assumptions on the coupling $f$, the Hamiltonian $H$ and the initial and terminal conditions $m_0$ and $\phi_T$. These conditions are supposed to hold throughout the paper. 
\begin{itemize}
\item[(H1)] (Condition on the coupling) the coupling $f:\T^d\times [0,+\infty)\to \R$ is continuous in both variables, increasing with respect to the second variable $m$, 
and there exist $q>1$  and $C_1$  such that 
\be\label{Hypf}
 \frac{1}{C_1}|m|^{q-1}-C_1\leq f(x,m) \leq C_1 |m|^{q-1}+C_1 \qquad \forall m\geq 0 \;.
\ee
Moreover we ask the following normalization condition to hold:
\be\label{Hypf(0,m)=0}
f(x,0)=0 \qquad \forall x\in \T^d\;.
\ee
We denote by  $p$  the conjugate of $q$: $1/p+1/q=1$. 
\item[(H2)] (Conditions on the Hamiltonian) The Hamiltonian  $H:\T^d\times \R^d\to\R$ is continuous in both variables, convex and differentiable in the second variable, with $D_pH$ continuous in both variables, and has a superlinear growth in the gradient variable: there exist $r>1$ and  $C_2>0$ such that
\be\label{HypGrowthH}
\frac{1}{rC_2} |\xi|^{r} -C_2\leq H(x,\xi) \leq \frac{C_2}{r}|\xi|^r+C_2\qquad \forall (x,\xi)\in \T^d\times \R^d\;.
\ee
We note for later use that the Fenchel conjugate $H^*$ of $H$ with respect to the second variable is continuous and satisfies similar inequalities
\be\label{HypHstar}
\frac{1}{r'C_2} |\xi|^{r'} -C_2\leq H^*(x,\xi) \leq \frac{C_2}{r'}|\xi|^{r'}+C_2\qquad \forall (x,\xi)\in \T^d\times \R^d\;,
\ee
where $r'$ is the conjugate of $r$: $\ds\frac{1}{r}+\frac{1}{r'}=1$. 

\item[(H3)] (Conditions on $A$) there exists a Lipschitz continuous map $\Sigma:\T^d\to \R^{d\times D}$  such that $\Sigma\Sigma^T=A$ : let $C_3$ be a constant such that
\be\label{HypA}
|\Sigma(x)-\Sigma(y)|\leq C_3 |x-y|  \qquad \forall x,y\in \T^d,
\ee
Moreover we suppose that
\be\label{CondCroiss}
{\rm either }\; r\geq  p\qquad {\rm or}\qquad A\equiv 0.
\ee
We recall that  $p$ is the conjugate of $q$.

\item[(H4)] (Conditions on the initial and terminal conditions) $\phi_T:\T^d\to \R$ is of class ${\mathcal C}^2$, while $m_0:\T^d\to \R$ is a $C^1$ positive density (namely $m_0>0$ and $\ds \int_{\T^d} m_0dx=1$). 
\end{itemize}

%\item 
Condition \eqref{Hypf(0,m)=0} is just a normalization condition, which we may assume  without loss of generality.  
Indeed,  if all the conditions (H1)$\dots$(H4) but \eqref{Hypf(0,m)=0} hold, then one just needs to replace $f(x,m)$ by $f(x,m)-f(x,0)$ and $H(x,p)$ by $H(x,p)-f(x,0)$: the new $H$ and $f$ still satisfy the above conditions (H1)$\dots$(H4) with
\eqref{Hypf(0,m)=0}. 

%\item 
Let us set 
$$\ds F(x,m)=
\left\{\begin{array}{ll}
\ds \int_0^m f(x,\tau)d\tau & {\rm if }\; m\geq 0\\
+\infty & {\rm otherwise}
\end{array}\right. 
$$
Then $F$ is   continuous on $\T^d\times (0,+\infty)$, differentiable and strictly convex in $m$  and satisfies 
\be\label{HypGrowthF}
 \frac{1}{qC_1}|m|^{q}-C_1\leq F(x,m) \leq  \frac{C_1}{q}|m|^{q}+C_1 \qquad \forall m\geq 0
\ee
(changing the constant $C_1$ if necessary). 
Let $F^*$ be the Fenchel conjugate of $F$ with respect to the second variable. Note that $F^*(x,a)=0$ for $a\leq 0$ because $F(x,m)$ is nonnegative and equal to $+\infty$ for $m<0$.  Moreover, 
\be\label{HypGrowthFstar}
\frac{1}{pC_1}|a|^{p}-C_1\leq F^*(x,a) \leq  \frac{C_1}{p}|a|^{p}+C_1 \qquad \forall a\geq 0\;.
\ee

%%%%%%%%%%%%%%%%%%%%%%%%%%%%%%%%%%%%%%%%%%%%%%%%
%%%%%%%%%%%%%%%%%%%%%%%%%%%%%%%%%%%%%%%%%%%%%%%%
\section{Basic estimates on solutions of Hamilton-Jacobi equations}\label{sec:estimates}\label{sec:esti}

In this section we prove estimates in Lebesgue spaces for subsolutions of Hamilton-Jacobi equations of the form 
\be\label{eq:HJ}
\left\{\begin{array}{cl}
(i)&- \partial_t \phi -A_{ij}(x)\partial_{ij}\phi+H(x,D\phi) \leq \alpha(t,x)\\
(ii)&  \phi(x,T)\leq\phi_T(x)
\end{array}\right.
\ee
in terms of Lebesgue norms of $\alpha$ and  $\phi_T$. We assume that \eqref{HypGrowthH} and \eqref{HypA} hold, and \eqref{eq:HJ} is understood in the sense of distributions. This means that $D\phi\in L^r$ and, for any nonnegative test function $\zeta\in C^\infty_c((0,T]\times \T^d)$, 
$$
-\int_{\T^d}\zeta(T)\phi_T+ \int_{0}^{T}\int_{\T^d} \phi\partial_t\zeta+ \lg D\zeta, AD\phi\rg + \zeta(\partial_iA_{ij}\partial_j\phi+ H(x,D\phi)) \leq \int_{0}^{T}\int_{\T^d} \alpha \zeta.
$$
The estimates will be a consequence of the divergence structure of second  order terms. 

\begin{Theorem}\label{int-reg}
Assume that   $\phi\in L^r((0,T);W^{1,r}(\T^d))$ is a nonnegative function satisfying, in distributional sense,
\be\label{ineq}
\left\{\begin{array}{cl}
(i)&- \partial_t \phi - \partial_i \left( A_{ij}(x)\partial_{j}\phi\right)+ c_0\, |D\phi|^r \leq \alpha(t,x)\\
(ii)&  \phi(x,T)\leq\phi_T(x)
\end{array}\right.
\ee
for some nonnegative, bounded Lipschitz  matrix $A_{ij}$, and some $r>1$, $c_0>0$, $\alpha \in \pelle p$ and $\phi_T\in \elle \infty$.
Then, there exists a constant $C=C(p,d,r,c_0, T, \|\alpha\|_{L^p((0,T)\times \T^d)}, \|\phi_T\|_{\elle \eta})$ such that 
$$
\|\phi\|_{L^\infty((0,T), L^{\eta}(\T^d))}+ \|\phi\|_{L^\gamma((0,T)\times \T^d)}\leq C 
%\left(\|\alpha\|_{L^p((0,T)\times \T^d)}+ \|\phi_T\|_{\elle m}\right)
$$
where $\eta=\frac{d( r(p-1)+1)}{d-r(p-1)}$ and $\gamma = \frac{rp(1+d)}{d-r(p-1)}$ if $p<1+\frac dr$ and $\eta=\gamma=+\infty$ if $p>1+\frac dr$. 
%If $p=1+\frac dr$, the estimate holds for any $m>1$ with $\gamma< (m-1)(1+\frac rd)$.
\end{Theorem}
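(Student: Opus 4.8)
The plan is to obtain the estimate by testing the inequality \eqref{ineq} against suitable powers of $\phi$ and exploiting the divergence structure of the second-order term, which is what makes the argument work despite the possible degeneracy of $A$. Concretely, I would fix an exponent $\beta \geq 1$ (to be optimized at the end) and use $\phi^\beta$ (or rather a truncated, smoothed version $\phi_k^\beta$ with $\phi_k = \min(\phi,k)$ to make everything rigorous, passing to the limit by monotone convergence at the close) as a test function in \eqref{ineq}. The term $-\partial_t\phi$ produces, after integration in time, the boundary contribution $\frac{1}{\beta+1}\int_{\T^d}\phi(t)^{\beta+1}$ together with $\frac{1}{\beta+1}\int_{\T^d}\phi_T^{\beta+1}$, which is controlled by $\|\phi_T\|_\infty$. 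The divergence term $-\partial_i(A_{ij}\partial_j\phi)$, tested against $\phi^\beta$, yields $\beta\int A_{ij}\partial_i\phi\,\partial_j\phi\,\phi^{\beta-1} \geq 0$ since $A$ is nonnegative — this is the crucial sign that lets us simply discard it (no ellipticity is needed). The good term $c_0\int |D\phi|^r \phi^\beta$ stays on the left-hand side; rewriting $|D\phi|^r\phi^\beta = c_\beta |D(\phi^{(\beta+r)/r})|^r$ turns it into a genuine $W^{1,r}$-type quantity for the function $w := \phi^{(\beta+r)/r}$.

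Thus after these manipulations I expect an inequality of the form
\[
\sup_{t}\int_{\T^d} w(t)^{\theta_1} + \int_0^T\int_{\T^d} |Dw|^r \ \leq\ C\Big(1 + \|\phi_T\|_\infty^{\beta+1} + \int_0^T\int_{\T^d}\alpha\,\phi^\beta\Big),
\]
where $\theta_1 = r(\beta+1)/(\beta+r)$. The right-hand side term $\int\alpha\,\phi^\beta$ is handled by Hölder: $\int\alpha\phi^\beta \leq \|\alpha\|_p \|\phi^\beta\|_{p'} = \|\alpha\|_p \|w\|_{rp'\beta/(\beta+r)}^{\ldots}$. On the left, the combination of an $L^\infty_t L^{\theta_1}_x$ bound and an $L^r_t W^{1,r}_x$ bound on $w$ gives, by the Gagliardo–Nirenberg–Sobolev interpolation inequality in space-time, a bound on $w$ in $L^{\sigma}((0,T)\times\T^d)$ for $\sigma = r\frac{\theta_1 + d}{d}$ (the standard parabolic Sobolev exponent associated with those two controls). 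The heart of the matter is then a bookkeeping computation: one must choose $\beta$ so that the power of $w$ appearing on the right after Hölder is strictly less than the power $\sigma$ controlled on the left, so that the term can be absorbed (using Young's inequality) into the left-hand side, closing the estimate. Carrying out this algebra is exactly what produces the stated exponents $\eta = \frac{d(r(p-1)+1)}{d - r(p-1)}$ and $\gamma = \frac{rp(1+d)}{d - r(p-1)}$, with the dichotomy at $p = 1 + d/r$ reflecting precisely whether the relevant Sobolev exponent is finite or whether one is in the regime where $w$, hence $\phi$, is bounded.

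The main obstacle I anticipate is twofold. First, the rigorous justification: $\phi$ is only in $L^r_t W^{1,r}_x$ and solves \eqref{ineq} in the distributional sense, so $\phi^\beta$ is not an admissible test function directly; one must work with truncations $\phi_k$, a time-regularization (e.g. Steklov averages or convolution in $t$) to handle $\partial_t\phi$, and verify that the distributional formulation from the paragraph before the theorem — which includes the term $\int \zeta\,\partial_i A_{ij}\partial_j\phi$ — is compatible with the integration by parts that moves the derivative onto $A\,\partial\phi$; the Lipschitz regularity of $A$ is what makes this legitimate, and one should be careful that the sign of the resulting quadratic form $\int A_{ij}\partial_i w\,\partial_j w$ is still nonnegative after the change of unknown to $w$. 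Second, the exponent optimization must be done carefully to land exactly on the claimed $\eta$ and $\gamma$ and to confirm that the absorption is possible precisely in the range $p < 1 + d/r$; I would treat the case $A \equiv 0$ (pure Hamilton–Jacobi, first-order) as a check, since there the divergence term disappears and the same computation should go through unchanged, consistent with condition \eqref{CondCroiss} not being needed here. Finally, once $w$ is bounded in $L^\sigma$, translating back gives $\phi \in L^\gamma_{t,x}$ and, from the $L^\infty_t L^{\theta_1}_x$ bound on $w$, the bound $\phi \in L^\infty_t L^\eta_x$, with $C$ depending only on the data as claimed.
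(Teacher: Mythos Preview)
Your approach for the subcritical case $p < 1 + d/r$ is essentially the paper's: test with a power $\phi^\beta$ (the paper writes it as $g(\phi) = \phi^{(\sigma-1)r}$, i.e.\ your $\beta = (\sigma-1)r$, and your $w = \phi^{(\beta+r)/r}$ is their $\phi^\sigma$), discard the nonnegative second-order quadratic form, combine the resulting $L^\infty_t L^{\theta_1}_x$ and $L^r_t\dot W^{1,r}_x$ controls on $w$ via the parabolic interpolation inequality (the paper cites DiBenedetto, Chapter~1, Proposition~3.1), then close by H\"older and absorption. The paper makes the chain rule rigorous by space--time mollification after extending $\phi$ by $\phi_T$ on $[T,T+1]$, using DiPerna--Lions commutator estimates for the divergence term; your truncation--plus--Steklov route would also work. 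One small point you gloss over: the interpolation inequality applies to functions with zero spatial mean, so one must separately control $\int_{\T^d}\phi(t)\,dx$ (take $g\equiv 1$, which bounds it by $\|\alpha\|_1+\|\phi_T\|_1$) and absorb the constant.

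Where your proposal has a genuine gap is the supercritical case $p > 1 + d/r$. Your sentence about ``the regime where $w$, hence $\phi$, is bounded'' does not follow from the same computation: in that range the absorption step \emph{fails} (the exponent $(1+r/d)/p'$ on the right exceeds~$1$), and no Sobolev embedding directly yields $L^\infty$ either. The paper switches to a different technique here --- a Stampacchia level-set iteration. One applies the chain-rule estimate with $g(s)=(s_+)^{r'}$ to $(\phi-k)_+$ for $k\geq \|\phi_T\|_\infty$, uses the same interpolation to bound $\|(\phi-k)_+^{r'}\|_{L^q}$ in terms of $\|\alpha\|_p$ and the measure $|A_k|$ of the superlevel set, and derives a recursion $|A_h|\leq C|A_k|^\beta/(h-k)^\delta$ with $\beta>1$ precisely when $p>1+d/r$; the standard iteration lemma then gives $|A_{k_0}|=0$ for some finite $k_0$. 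This is not a bookkeeping variant of your power-testing scheme and must be added to complete the proof.
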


We note for later use that $\gamma>r$. 

\begin{proof}
Up to a rescaling, we may assume that $c_0=1$. We first claim that, for any real function $g\in W^{1,\infty}(\R)$ which is nondecreasing, and nonnegative in $\R_+$, we have
\be\label{chain}
 \int_{\T^d}  G(\phi(\tau))  \, dx   +   \int_\tau^T \int_{\T^d} |D\phi  |^r\, g(\phi)\,dxdt \leq  \int_\tau^T \int_{\T^d} \alpha\, g(\phi)\, dxdt+ \int_{\T^d}   G(\phi_T)\,  dx
\ee
for a.e. $\tau\in (0,T)$, where $G(r)=\int_0^r g(s)\,ds$.

There are several possible ways to justify \eqref{chain}, one is to use  regularization.

We first extend $\phi$  to $(0,T+1]\times \T^d$ by defining  $\phi= \phi_T$ on $[T,T+1]$. Then it still holds in the sense of distributions 
\be\label{extineq}
- \partial_t \phi - \partial_i \left( \tilde A_{ij}(t,x)\partial_{j}\phi\right)+  |D\phi|^r\,\chi_{(0,T)} \leq \tilde \alpha(t,x)
\ee
where $\tilde A_{ij}(t,x)=  A_{ij}(x)\chi_{(0,T)}(t)$ and $\tilde \alpha(t,x)= \alpha (t,x)\chi_{(0,T)}(t)$.
Let $\xi$ be a standard convolution kernel in $(t,x)$ defined  on $\R^{d+1}$ and $\xi^\eps(t,x)=\xi((t,x)/\eps)/(\eps)^{d+1}$, $\xi^\eps\geq0,\ \int_{\R^{d+1}}\xi^\eps(t,x) dtdx=1$, for all $\eps>0$. Let $\phi_\eps= \xi^\eps\star \phi$ and $\alpha_\eps=\xi^\eps\star \tilde \alpha$. Then  $\phi_\eps, \alpha_\eps$ are $C^\infty$ and converge  to $\phi,\alpha$ in their respective Lebesgue spaces.
 Convolving $\xi^\eps$ with (\ref{extineq}) we obtain on $(0,T+1)\times \T^d$: 
$$
- \partial_t \phi_\eps - \partial_i \left( \tilde A_{ij}(t,x)\partial_{j}\phi_\eps\right)+    |\xi_\eps\star(D\tilde \phi)|^r \leq \alpha_\eps(t,x)+R_\eps
$$
 where $D\tilde \phi= D\phi\chi_{(0,T)}$  and we used the fact that $D\phi\mapsto |D\phi|^r$ is convex,  and where  $R_\eps= -\partial_i \left( \tilde A_{ij}(t,x)\partial_{j}\phi_\eps\right)+\xi_\eps\star \partial_i \left(  \tilde A_{ij}(t,x)\partial_{j}\phi\right)$.
 
Using the notation (cf. \cite{DL89})
$$
[\xi^\epsilon,c](f) := \xi^\epsilon \star (cf) - c(\xi^\epsilon \star f)
$$
we can rewrite $R_\eps$ as $R_\eps= [\xi_\eps, \partial_i  \tilde A_{ij}](\partial_j \phi)+[\xi_\eps,  \tilde A_{ij}\partial _i](\partial_j \phi)$.
Invoking \cite[Lemma II.1]{DL89}, we have that $R_\epsilon \to 0$ in $L^r$, since $D\phi \in L^r$ and $ A_{ij} $ is Lipschitz.

Multiplying by $g(\phi_\eps)$ and integrating over $[\tau,T+\vep]\times \T^d$, for  $\tau\in (0,T)$, it follows 
\begin{align*} &\int_{\T^d}G(\phi_\eps(\tau))dx-\int_{\T^d}G(\phi_\eps(T+\vep))dx +\int_\tau^T\int_{\T^d}  A_{ij}(x)\partial_{j}\phi_\eps g'(\phi_\eps)\partial _i \phi_\eps dxdt\\ & +  \int_\tau^{T+\vep}\int_{\T^d} |\xi_\eps\star(D\tilde \phi)|^r g(\phi_\eps)dx dt \leq \int_\tau^{T+\vep}\int_{\T^d} g(\phi_\eps)\alpha_\eps(t,x) dx dt +\int_\tau^{T+\vep}\int_{\T^d} g(\phi_\eps)R_\eps dxdt\ .
\end{align*}
Since $\int_\tau^T\int_{\T^d}  A_{ij}(x)\partial_{j}\phi_\eps g'(\phi_\eps)\partial _i \phi_\eps dxdt\geq 0$,  and since $\phi_\eps(T+\vep)= \xi^\eps\star\phi_T$, we obtain 
\begin{align*} &\int_{\T^d}G(\phi_\eps(\tau))dx-\int_{\T^d}G(\xi^\eps\star\phi_T)dx +  \int_\tau^{T+\vep}\int_{\T^d} |\xi_\eps\star(D\tilde \phi)|^r g(\phi_\eps)dx dt \\ &  \leq \int_\tau^{T+\vep}\int_{\T^d} g(\phi_\eps)\alpha_\eps(t,x) dx dt +\int_\tau^{T+\vep}\int_{\T^d} g(\phi_\eps)R_\eps dxdt\ .
\end{align*}
Since $g$ is bounded, while $R_\epsilon$ and $\alpha_\vep$ converge in $\pelle r$ and in $\pelle p$ respectively, we can pass  to the limit as $\eps$ goes to zero and for almost every $\tau$ we get (\ref{chain}).
\vskip0.3em
Now we proceed with the desired estimate. First we observe that, up to replacing $g(r)$ with $g(r\wedge k)$, we can assume that $\phi$ is bounded and that $g$ may be any $C^1$ function. In particular, we take $g(\phi)= \phi^{(\sigma-1) r}$ for $\sigma > 1$, obtaining
\begin{align*}
& \frac1{(\sigma-1)r+1}\int_{\T^d}  \phi(\tau)^{(\sigma-1)r+1}  \, dx   +  \frac1{\sigma^r}\int_\tau^T \int_{\T^d} |D\phi^\sigma  |^r\, dxdt 
\\
& \leq  \int_\tau^T \int_{\T^d} \alpha\, \phi^{(\sigma-1)r}\, dxdt + \frac1{(\sigma-1)r+1}\int_{\T^d}   \phi_T^{(\sigma-1)r+1}\,  dx.
\end{align*}
Let us denote henceforth by $c$ possibly different constants only depending on $r$, $\sigma$, $d$ and $T$.  By arbitrariness of $\tau$, the previous inequality implies
\begin{align*}
&  \| \phi^\sigma\|_{\limitate{\frac{(\sigma-1)r+1}\sigma}}^{\frac{(\sigma-1)r+1}\sigma}   +  \|D\phi^\sigma\|_{\pelle r}^r
\\
& \leq  c \int_0^T \int_{\T^d} \alpha\, \phi^{(\sigma-1)r}\, dxdt + c \int_{\T^d}   \phi_T^{(\sigma-1)r+1}\,  dx.
\end{align*}
On the other hand, by interpolation we have (see e.g. \cite[Proposition 3.1, Chapter 1]{DiB})
\be\label{dibe}
 \| v\|_{\pelle q}^q \leq  c\,  \| v\|_{\limitate \eta}^{\frac{\eta r}d}     \|Dv\|_{\pelle r}^r \qquad \hbox{where $q=r \,\frac{d+\eta}d$}
\ee
for any $v\in L^r((0,T);W^{1,r}(\T^d))$ such that $\inte v(t) \, dx=0$ a.e. in $(0,T)$. So we deduce that
\begin{align*}
&  \| \phi^\sigma\|_{\pelle q}^q     \leq  c \left\{\int_0^T \int_{\T^d} \alpha\, \phi^{(\sigma-1)r}\, dxdt +  \int_{\T^d}   \phi_T^{(\sigma-1)r+1}\,  dx\right\}^{1+\frac rd} 
\\
& \qquad + c \int_0^T \left(\inte \phi(t)^\sigma\, dx \right)^q\, dt
\end{align*}
for $\eta=\frac{(\sigma-1)r+1}\sigma$ and  $q= r\,\frac{\eta+d}d$. We choose $\sigma$ such that
$$
\sigma q= (\sigma-1)r p'
$$
and therefore, by H\"older inequality, we conclude
\begin{align*}
&  \| \phi^\sigma\|_{\pelle q}^q     \leq  c \, \| \alpha\|_{\pelle p}^{1+\frac rd} \| \phi^\sigma\|_{\pelle q}^{\frac q{p'}(1+\frac rd)}  + c \left(\int_{\T^d}   \phi_T^{(\sigma-1)r+1}\,  dx\right)^{1+\frac rd} 
\\
& \qquad + c \int_0^T \left(\inte \phi(t)^\sigma\, dx \right)^q\, dt\,.
\end{align*}
Since $ \inte \phi(t)\, dx$ is estimated in terms of $\|\alpha\|_{\pelle1}$ and $\|\phi_T\|_{\elle1}$, last term can be absorbed into the left-hand side up to a constant $C= C( \|\alpha\|_{\pelle1}, \|\phi_T\|_{\elle1})$. 
Moreover, since $p<1+\frac dr$, we have $\frac q{p'}(1+\frac rd)<q$. Hence we end up  with an estimate
$$
\| \phi^\sigma\|_{\pelle q}^q     \leq   C( \|\alpha\|_{\pelle p}, \|\phi_T\|_{\elle{(\sigma-1)r+1} })\,.
$$
Computing the value of $\sigma$ in terms of $r$ and $p$ we get 
$$
q\sigma= \frac{rp(1+d)}{d-r(p-1)}\qquad \hbox{and}\quad (\sigma-1)r+1=\frac{d( r(p-1)+1)}{d-r(p-1)}
$$
so the first part of the Theorem is proved.
\vskip0.3em
Finally, we prove the $L^\infty$ estimate by using a  strategy which goes back  to \cite{St}. To this purpose, we replace $\phi$ with $\phi-k$ and use \eqref{chain} with $g(s)= (s_+)^{r'}$; for any $k\geq \|\phi_T\|_{L^\infty(\T^d)}$ we obtain
$$
 \int_{\T^d}  [(\phi-k)_+(\tau)]^{\sigma+1}  \, dx   +    \int_\tau^T \int_{\T^d} |D(\phi-k)_+^\sigma  |^r\, dxdt 
\leq  \int_\tau^T \int_{\T^d} \alpha\, (\phi-k)_+^{\sigma}\, dxdt 
$$
with $\sigma=r'$. Using as before the embedding \eqref{dibe} we get
\begin{align*}
& \|(\phi-k)_+^\sigma\|_{\pelle q}^q\leq  c \left\{ \int_0^T \int_{\T^d} \alpha\, (\phi-k)_+^{\sigma}\, dxdt \right\}^{1+\frac rd}+ c \int_0^T \left(\inte (\phi-k)_+^\sigma\, dx \right)^q\, dt
\\
& \qquad \leq c\left\{ \int_0^T \int_{\T^d} \alpha\, (\phi-k)_+^{\sigma}\, dxdt \right\}^{1+\frac rd}+
c \int_0^T |\{ x\, : \phi(t)>k\}|^{q-1}\inte (\phi-k)_+^{\sigma\, q}\,dxdt\,,
\end{align*}
where, using that $\sigma=r'$, we have 
\be\label{q}
q= r \, \frac{\frac{\sigma+1}\sigma+d}d= \frac rd (d+2-\frac1r)\,.
\ee
Notice that $|\{ x\, : \phi(t)>k\}|$ is uniformly small provided $k$ is large, only depending on $\|\alpha\|_{L^1}$ and $\|\phi_T\|_{L^1}$. Therefore, absorbing last term in the left-hand side we deduce
$$
  \|(\phi-k)_+^\sigma\|_{\pelle q}^q\leq  c \left\{ \int_0^T \int_{\T^d} \alpha\, (\phi-k)_+^{\sigma}\, dxdt \right\}^{1+\frac rd} 
$$
for some $c=c(\|\alpha\|_{\pelle1}, \|\phi_T\|_{\elle 1})$. One can check that, since $r>1$, \eqref{q} implies   $q>1+\frac rd$ and, in particular, $\frac1q+\frac1p<1$. Thus, by H\"older inequality we get
$$
\|(\phi-k)_+^\sigma\|_{\pelle q}^q\leq c \, \|(\phi-k)_+^\sigma\|_{\pelle q}^{1+\frac rd} \, \|\alpha\|_{\pelle p}^{1+\frac rd}\, |A_k|^{(1-\frac1q-\frac1p)(1+\frac rd)}  \,,
$$
where $A_k:= \{(t,x)\,: \phi(t,x)>k\}$. Since, for any $h>k$ we have 
$$
\intq (\phi-k)_+^{\sigma q}\,dxdt \geq |A_h| (h-k)^{\sigma q}\,,
$$
we end up with the inequality
$$
|A_h|^{1-\frac1q(1+\frac rd)} (h-k)^{\sigma\, q- \sigma(1+\frac rd)}\leq \|(\phi-k)_+^\sigma\|_{\pelle q}^{q-(1+\frac rd)} \leq c \,   \|\alpha\|_{\pelle p}^{1+\frac rd}\, |A_k|^{(1-\frac1q-\frac1p)(1+\frac rd)}
$$
which means that
$$
|A_h|\leq C\, \frac{|A_k|^{\beta}}{(h-k)^\de}\qquad \forall h>k\geq \|\phi_T\|_{L^\infty(\T^d)}
$$
for some $C=C(\|\alpha\|_{\pelle p})$, some $\delta>0$ and with $\beta= \frac{(1-\frac1q-\frac1p)(1+\frac rd)}{1-\frac1q(1+\frac rd)}$. One can check that $\beta>1$ since $p>1+\frac dr$. Therefore, by  a classical iteration lemma  (see e.g. \cite{St}), it follows that $|A_{k_0}|=0$ for some (explicit) $k_0>0$, which in particular implies  the desired bound in terms of $\|\alpha\|_{\pelle p}$ and $\|\phi_T\|_{L^\infty(\T^d)}$.
\end{proof}

\begin{Remark}{\rm
The assumption that $\phi$ is  nonnegative can  be dropped and in this case the estimates are given on $\phi_+$; indeed,   if  $\phi$ satisfies \rife{ineq},  then $\phi_+$ also does. This can be seen in the previous proof  by taking $g=g(r_+)$, with $g(0)=0$.

Let us also stress that the Lipschitz continuity of the matrix $A$ was only used to recover the estimate from the distributional formulation (namely, to be sure that $\phi$ is limit of solutions of smooth approximating problems). The constant $C$ of the estimate, however, does not depend on $A$ in any way;   in particular, the estimate will hold uniformly for any viscous approximation to possibly less regular matrices. }
\end{Remark}

As a corollary, we deduce the following result for problem \eqref{eq:HJ}.

\begin{Theorem} \label{thm:estiHJ}
Assume that   \eqref{HypGrowthH} and \eqref{HypA} hold true and let $\phi$ satisfy \eqref{eq:HJ} with $\alpha\in \pelle p$, $\phi_T\in \elle\infty$. Then, $\phi_+$ satisfies the estimates of Theorem \ref{int-reg}. In particular, if $\phi$ is bounded below, we have
$$
\|\phi\|_{L^\infty((0,T), L^{\eta}(\T^d))}+ \|\phi\|_{L^\gamma((0,T)\times \T^d)}\leq C 
$$
where $\eta=\frac{d( r(p-1)+1)}{d-r(p-1)}$ and $\gamma = \frac{rp(1+d)}{d-r(p-1)}$ if $p<1+\frac dr$ and $\eta=\gamma=+\infty$ if $p>1+\frac dr$, with a constant $C$ depending on $T, p,d,r, C_2, C_3$ (appearing in \eqref{HypGrowthH} and \eqref{HypA})  and on $\|\alpha\|_{L^p((0,T)\times \T^d)}, \|\phi_T\|_{\elle \eta}$ and $\|\phi_-\|_{L^\infty(\T^d)}$. 
\end{Theorem}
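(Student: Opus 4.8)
The plan is to bring the fully nonlinear inequality \eqref{eq:HJ} into the exact form \eqref{ineq} and then quote Theorem~\ref{int-reg}, together with the Remark following it, which removes the sign restriction on the unknown.

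First I would pass to divergence form in \eqref{eq:HJ}(i). Since $\Sigma$ is Lipschitz, $A=\Sigma\Sigma^T$ is Lipschitz as well, so $\partial_iA_{ij}\in L^\infty(\T^d)$ with $\|\partial_iA_{ij}\|_\infty\le C(C_3)$, and $A_{ij}\partial_{ij}\phi=\partial_i(A_{ij}\partial_j\phi)-(\partial_iA_{ij})\partial_j\phi$ in the distributional sense — this is precisely the weak formulation written just after \eqref{eq:HJ}. Hence \eqref{eq:HJ}(i) gives
$$-\partial_t\phi-\partial_i(A_{ij}\partial_j\phi)+H(x,D\phi)\le\alpha+(\partial_iA_{ij})\partial_j\phi\le\alpha+C(C_3)|D\phi|.$$
Now I would use the coercivity half of \eqref{HypGrowthH}, $H(x,\xi)\ge\frac1{rC_2}|\xi|^r-C_2$, to replace $H(x,D\phi)$ on the left by $\frac1{rC_2}|D\phi|^r-C_2$, and then absorb the lower-order term $C(C_3)|D\phi|$ on the right into $\frac1{2rC_2}|D\phi|^r$ by Young's inequality — this step only uses $r>1$ — at the cost of an additive constant $C(C_2,C_3,r)$. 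This produces
$$-\partial_t\phi-\partial_i(A_{ij}\partial_j\phi)+\tfrac1{2rC_2}|D\phi|^r\le\tilde\alpha,\qquad\phi(\cdot,T)\le\phi_T\le\|\phi_T\|_\infty,$$
with $\tilde\alpha:=\alpha+C_2+C(C_2,C_3,r)$. On $(0,T)\times\T^d$ this $\tilde\alpha$ lies in $L^p$, with $\|\tilde\alpha\|_{L^p}\le\|\alpha\|_{L^p}+C(C_2,C_3,r,p,d,T)$, and $A$ is a nonnegative bounded Lipschitz matrix; the zeroth-order integrability needed to run Theorem~\ref{int-reg} follows, exactly as in its proof, by testing the inequality against functions of $t$ only. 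Thus Theorem~\ref{int-reg} applies with $c_0=\frac1{2rC_2}$, and by the Remark following it (use a cut-off $g$ vanishing on $(-\infty,0]$, and note $\phi_+(\cdot,T)\le(\phi_T)_+\le\|\phi_T\|_\infty$) the stated bounds hold for $\phi_+$, with a constant depending only on $T,p,d,r,C_2,C_3$ and on $\|\alpha\|_{L^p((0,T)\times\T^d)}$, $\|\phi_T\|_{\elle\eta}$. Finally, if $\phi$ is bounded below, writing $\phi=\phi_+-\phi_-$ with $\|\phi_-\|_{L^\infty(\T^d)}<\infty$ and using $\|\phi_-\|_{L^\gamma((0,T)\times\T^d)}+\|\phi_-\|_{L^\infty((0,T);L^\eta(\T^d))}\le C(T,d)\|\phi_-\|_{L^\infty(\T^d)}$ promotes the estimate on $\phi_+$ to the asserted estimate on $\phi$. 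The case $A\equiv0$ is contained in the above, with $c_0=\frac1{rC_2}$ and no drift term.

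Since Theorem~\ref{int-reg} is already in hand, I do not anticipate any real difficulty. The one structural point worth stressing is that the drift $(\partial_iA_{ij})\partial_j\phi$ created by the divergence rewriting is genuinely subcritical — linear in $D\phi$ against an $r$-power of coercivity — so it can always be swallowed regardless of the value of $q$; this is why Theorem~\ref{thm:estiHJ} requires no compatibility assumption such as \eqref{CondCroiss}. The remaining care is purely bookkeeping: checking that the reduction to $\phi_+$ and the boundedness-from-below step introduce no dependence beyond $\|\phi_-\|_{L^\infty(\T^d)}$, $\|\phi_T\|_{\elle\eta}$, $\|\alpha\|_{L^p((0,T)\times\T^d)}$ and the structural constants $C_2,C_3$.
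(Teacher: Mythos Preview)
Your argument is correct and is precisely the reduction the paper has in mind: the paper states Theorem~\ref{thm:estiHJ} as an immediate corollary of Theorem~\ref{int-reg} without writing any proof, and the passage from \eqref{eq:HJ} to \eqref{ineq} that you spell out---divergence rewriting, coercivity lower bound on $H$, absorption of the linear drift $(\partial_iA_{ij})\partial_j\phi$ by Young's inequality, then invoking the Remark to pass to $\phi_+$---is exactly the intended route. Your bookkeeping on the constants and on the final step from $\phi_+$ to $\phi$ via $\|\phi_-\|_{L^\infty}$ is also right.
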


%\begin{Remark}{\rm When $r>2$ and $p>1+d/r$, it is proved\footnote{under which assumptions on matrix $A$ ? and for which type of solutions ?} in \cite{CS} that the solution $\phi$ is locally H\"{o}lder continuous, with H\"{o}lder modulus depending only on the growth condition of $H$, the $L^\infty$ bound on $\phi$ and the $L^p$ norm of $\alpha$. 
%}\end{Remark}

%%%%%%%%%%%%%%%%%%%%%%%%%%%%%%%%%%%%%%%%%%%%%%%%
%%%%%%%%%%%%%%%%%%%%%%%%%%%%%%%%%%%%%%%%%%%%%%%%
\section{Two optimization problems}\label{sec:2opti}

Mean field games systems with local coupling can be studied as an optimality condition between two  problems in duality.

The first optimization problem is described as follows:  let us denote by $\mathcal K_0$ the set of   maps $\phi\in {\mathcal C}^2([0,T]\times \T^d)$ such that $\phi(T,x)= \phi_T(x)$ and define, on $\mathcal K_0$, the functional 
\be\label{DefmathcalA}
{\mathcal A}(\phi)= \int_0^T\int_{\T^d}  F^*\left(x,-\partial_t\phi(t,x)-A_{ij}\partial_{ij}\phi+H(x,D\phi(t,x)) \right)\ dxdt - \int_{\T^d} \phi(0,x)dm_0(x). 
\ee
Then the problem consists in optimizing
\be\label{PB:dual2}
\inf_{\phi\in \mathcal K_0} \mathcal A(\phi)\;.
\ee

For the second optimization problem, let $\mathcal K_1$ be the set of pairs $(m, w)\in L^1((0,T)\times \T^d) \times L^1((0,T)\times \T^d,\R^d)$ such that $m(t,x)\geq 0$ a.e., with $\ds \int_{\T^d}m(t,x)dx=1$ for a.e. $t\in (0,T)$, and which satisfy in the sense of distributions the continuity equation
\be\label{conteq}
\partial_t m-\partial_{ij}(A_{ij}(x)m)+{\rm div} (w)=0\; {\rm in}\; (0,T)\times \T^d, \qquad m(0)=m_0. 
\ee
On the set $\mathcal K_1$, let us  define  the following  functional
$$
{\mathcal B}(m,w)=  \int_0^T\int_{\T^d} m(t,x) H^*\left(x, -\frac{w(t,x)}{m(t,x)}\right)+ F(x,m(t,x)) \ dxdt + \int_{\T^d} \phi_T(x)m(T,x)dx
$$
where, for $m(t,x)=0$, we impose that 
$$
m(t,x) H^*\left(x, -\frac{w(t,x)}{m(t,x)}\right)=\left\{\begin{array}{ll}
+\infty & {\rm if }\; w(t,x)\neq 0\\
0 & {\rm if }\; w(t,x)=0
\end{array}\right..
$$
Since $H^*$ and $F$ are bounded from below and $m\geq 0$ a.e., the first integral in ${\mathcal B}(m,w)$  is well defined in  $\R\cup\{+\infty\}$. In order to give a meaning to the last integral $\ \int_{\T^d} \phi_T(x)m(T,x)dx\ $ we proceed as follows: let us define$\ \ds v(t,x)= -\frac{w(t,x)}{m(t,x)}\ $ if $m(t,x)>0$ and $\ v(t,x)=0\ $ otherwise. 

Thanks to the growth of $H^*$ (implied by  \eqref{HypHstar}, which follows from (H2)), ${\mathcal B}(m,w)$ is infinite if $m|v|^{r'}\notin L^{1}(dxdt)$. Therefore, we can assume without loss of generality that $m|v|^{r'}\in L^{1}(dxdt)$, or, equivalently, that   $v\in L^{r'}(m\ dxdt)$. In this case equation \eqref{conteq} can be rewritten as a Kolmogorov equation 
\be\label{conteq12}
\partial_t m-\partial _{ij}(A_{ij}(x)m)-{\rm div} (mv)=0\; {\rm in}\; (0,T)\times \T^d, \qquad m(0)=m_0. 
\ee

\begin{Lemma}  \label{lem:m-cts}
The map $t\mapsto m(t)$ is H\"older continuous a.e. for the weak* topology of $P(\T^d)$. 
\end{Lemma}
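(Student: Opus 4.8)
The plan is to test the continuity equation \eqref{conteq12} against a smooth function $\xi\in C^\infty(\T^d)$ and control the resulting time-increments. First I would record that, since $(m,w)\in\mathcal K_1$ with $\mathcal B(m,w)<+\infty$, we have $m\in L^q((0,T)\times\T^d)$ (by the growth \eqref{HypGrowthF} of $F$) and $v\in L^{r'}(m\,dxdt)$, hence $mv=-w\in L^1$; moreover by H\"older, $m|v|\le \bigl(m|v|^{r'}\bigr)^{1/r'} m^{1/r}$, and since $m\in L^q\subset L^r$ when $r\le p$ (and when $A\equiv 0$ the second-order term is absent), one checks $w=mv\in L^1((0,T)\times\T^d;\R^d)$ with an explicit bound. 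For $\xi\in C^2(\T^d)$, the distributional formulation of \eqref{conteq12} gives, for a.e. $0\le s\le t\le T$,
\[
\int_{\T^d}\xi\,dm(t)-\int_{\T^d}\xi\,dm(s)=\int_s^t\int_{\T^d}\bigl(A_{ij}(x)\,m\,\partial_{ij}\xi + \langle D\xi, mv\rangle\bigr)\,dx\,d\tau .
\]
This identity, valid first for a.e. pair $(s,t)$ outside a null set, is what I would exploit.

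Next I would estimate the right-hand side. The first term is bounded by $\|A\|_\infty\|D^2\xi\|_\infty\int_s^t\|m(\tau)\|_1\,d\tau=\|A\|_\infty\|D^2\xi\|_\infty\,(t-s)$ since $\|m(\tau)\|_1=1$. For the second term, H\"older in space and then in time gives
\[
\int_s^t\int_{\T^d}|D\xi|\,m|v|\,dx\,d\tau\le \|D\xi\|_\infty\int_s^t\|m(\tau)\|_{r}^{1/r}\Bigl(\int_{\T^d}m|v|^{r'}dx\Bigr)^{1/r'}d\tau\le \|D\xi\|_\infty\,\|m\|_{L^r}^{1/r}\,\Bigl(\int_s^t\!\!\int_{\T^d}m|v|^{r'}\Bigr)^{1/r'}(t-s)^{1/r},
\]
using H\"older in $\tau$ with exponents $(r',r)$ on the last step. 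Hence $\bigl|\int\xi\,dm(t)-\int\xi\,dm(s)\bigr|\le C_\xi\bigl((t-s)+(t-s)^{1/r}\bigr)\le C_\xi'(t-s)^{1/r}$ for $|t-s|\le 1$, where $C_\xi'$ depends only on $\|\xi\|_{C^2}$, $\|A\|_\infty$, $\|m\|_{L^r}$ and $\|m|v|^{r'}\|_{L^1}$, all finite. Since finitely (or countably) many such $\xi$ metrize the weak* topology on $P(\T^d)$, and each of these countably many increment-bounds holds for a.e. pair $(s,t)$, we get a single full-measure set $N\subset(0,T)$ off which $\mathbf d(m(t),m(s))\le C(t-s)^{1/r}$ for a suitable metric $\mathbf d$ inducing weak* convergence (e.g. a weighted sum over a countable dense family of test functions in the unit $C^2$-ball). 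This is the asserted H\"older continuity a.e.

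The main obstacle is purely a matter of care rather than depth: one must ensure the distributional identity above can be upgraded from "for a.e. $(s,t)$" to "for all $s,t$ in a common full-measure set" \emph{simultaneously} for a countable family of test functions, and that the exceptional null set does not depend on $\xi$. This is handled by fixing once and for all a countable family $\{\xi_n\}$ dense in the unit ball of $C^2(\T^d)$, applying the increment estimate to each $\xi_n$, and taking the union of the corresponding null sets; the uniform H\"older bound then passes to the metric $\mathbf d(\mu,\nu)=\sum_n 2^{-n}\bigl|\int\xi_n\,d(\mu-\nu)\bigr|$. A secondary point is to double-check the integrability $w\in L^1$ that makes the weak formulation legitimate, which is exactly where the standing assumption \eqref{CondCroiss} ($r\ge p$ or $A\equiv 0$) is used; when $A\equiv 0$ the $A_{ij}\partial_{ij}\xi$ term simply vanishes and only the $L^1$-bound on $w$ (which still holds, since $m\in L^q$ and $q\ge r'$ is equivalent to $p\le r$... in the degenerate case one argues directly) is needed.
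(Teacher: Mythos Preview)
Your argument is correct and gives a genuinely different, more elementary proof than the paper's. The paper proceeds by mollifying $(m,w)$, writing the regularized Kolmogorov equation as the law of an explicit SDE $dX^\ep_t=v_\ep\,dt+\Sigma_\ep\,dB_t$, and then bounding the Kantorovich--Rubinstein distance $\mathbf d_1(m_\ep(t),m_\ep(s))\le \E|X^\ep_t-X^\ep_s|$; the drift part yields $|t-s|^{1/r}$ via H\"older exactly as you do, while the diffusion part contributes $\|A\|_\infty^{1/2}|t-s|^{1/2}$ through the It\^o isometry. Your route avoids mollification and stochastic analysis entirely by testing the equation directly against $C^2$ functions; the price is that your H\"older bound is only uniform over test functions with bounded $C^2$ norm (and the diffusion term contributes $|t-s|$ rather than $|t-s|^{1/2}$), whereas the paper obtains H\"older continuity in the stronger Wasserstein metric $\mathbf d_1$, i.e.\ uniformly over $1$-Lipschitz test functions. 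For the lemma as stated (weak* topology) your conclusion is sufficient.

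Two small clean-ups. First, in the drift estimate the correct intermediate factor is $\|m(\tau)\|_{1}^{1/r}=1$, not $\|m(\tau)\|_{r}^{1/r}$: writing $m|v|=(m|v|^{r'})^{1/r'}m^{1/r}$ and applying H\"older with exponents $(r',r)$ pairs $m^{1/r}$ with $\|m\|_1$, so the spurious $\|m\|_{L^r}^{1/r}$ factor in your displayed bound should simply be $1$. Second, your side remarks invoking $m\in L^q$ and condition \eqref{CondCroiss} are unnecessary here: $w\in L^1$ is already part of the definition of $\mathcal K_1$, and your main estimate uses only $m|v|^{r'}\in L^1$ together with $\int m(\tau)=1$, which is exactly the standing hypothesis preceding the lemma. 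With these corrections the argument is complete and arguably preferable for its directness.
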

This Lemma implies,  in particular, that the measure $m(t)$ is defined for any $t$, therefore the second integral term in the definition of ${\mathcal B}(m,w)$ is well defined. 

For the sake of completeness, we give the proof here.

\begin{proof} We first extend the pairs $(m,w)$  to $[-1,T]\times \T^d$ by defining $m=m_0$ on $[-1,0]$ and $w(s,x)=0$ for $(s,x)\in (-1,0)\times \T^d$. Note that  $\partial_t m-\partial_{ij}(\tilde A_{ij}(t,x)m)+{\rm div}(w)=0$ holds in the sense of distributions on $(-1,T)\times \T^d$, where $\tilde A_{ij}(t,x)=A_{ij}(x)$ if $t\in (0,T)$ and $\tilde A_{ij}(t,x)=0$ otherwise.   Let $\xi$ be a standard convolution kernel in $(t,x)$, a support compact on $\R^{d+1}$ and $\xi^\eps(t,x)=\xi((t,x)/\eps)/(\eps)^{d+1}$, $\xi^\eps\geq0,\ \int_{\R^{d+1}}\xi^\eps(t,x) dtdx=1$, for all $\eps>0$. Let $m_\eps= \xi^\eps\star m$ and $w_\eps=\xi^\eps\star w$. Then  $m_\eps, w_\eps$ are $C^\infty$ and $\int_{\T^d}m_\eps(t,x)dx=1$ for all $t\in(0,T)$ and $\eps>0$ small enough.

Convolving $\xi^\eps$ with (\ref{conteq}), we obtain
$$
\partial_t m_\eps-\partial_{ij}(\xi^\eps*(\tilde A_{ij}(t,x) m))+{\rm div} (w_\eps)=0\; {\rm in}\; (-1/2,T)\times \T^d,
$$
with 
$$
m_\eps(-1/2,x)=\int_\R\int_{\T^d} \xi^\eps(s, x-y) m_0(y)dyds.
$$
The equation can be rewritten as 
\be\label{dual-eps}
\partial_t m_\eps-\partial_{ij}( \tilde A^\eps_{ij}(t,x)m_\eps))-{\rm div} (m_\eps v_\eps)=0\; {\rm in}\; (-1/2,T)\times \T^d
%, \quad m_\eps(-1/2,x)=\int_\R\int_{\T^d} \xi^\eps(s, x-y) m_0(y)dyds,
\ee
where 
$ \tilde A^\eps_{ij}=\frac{\xi^\eps\star(\tilde A_{ij}m)}{m_\eps}$ and $v_\eps=-\frac{w_\eps}{m_\eps}$.

Let us consider the following stochastic differential equations defined for all $\eps>0$
\be\label{eqstoch}
\left\{\begin{array}{ll} dX^\eps_t=v_\eps(t,X^\eps_t)dt+\Sigma_\eps(X^\eps_t) dB^\eps_t & t\in[-1/2,T]\\ X^\eps_{-1/2}=Z_{-1/2}^\eps \end{array}\right.,
\ee
where $dB^\eps_t$ is a standard $d$-dimensional Brownian motion over some probability space $(\Omega,\mathcal A,\mathbb P)$, $\Sigma_\eps\Sigma_\eps^T=\tilde A^\eps$, and the initial condition $Z^\eps_{-1/2}\in L^1(\T^d)$ is random, independent of $(B^\eps_t)$ and with law $m_\eps(-1/2,\cdot)$. 

For all $\eps>0$, the vector field $v_\eps$ is continuous, uniformly Lipschitz continuous in space and bounded. Therefore, there exists a unique solution to (\ref{eqstoch}). Moreover, as a consequence of Ito's formula, we have that, if the density $\mathcal L (Z^\eps_0)=\xi^\eps\star m_0$, then $m_\eps(t)=\mathcal L(X^\eps_t)$ solves (\ref{dual-eps}) in the sense of distributions.

Let $\bf d_1$ be the Kantorovich-Rubinstein distance on $P(\T^d)$ and $\gamma_\eps\in\Pi(m_\eps(t),m_\eps(s))$ the law of the pair $(X^\eps_t, X^\eps_s)$ for $0\leq s<t\leq T$, where $\Pi(m_\eps(t),m_\eps(s))$ is the set of Borel probability measures $\mu$ on $\T^d\times\T^d$ such that $\mu(A\times\T^d)=m_\eps(t,A)$ and $\mu(\T^d\times A)=m_\eps(s,A)$ for any Borel set $A\in \T^d$. We have
$$
{\bf d_1 }(m_\eps(t),m_\eps(s))\leq \int_{\T^d\times\T^d}|x-y| d\gamma_\eps(x,y)= \mathbb E[|X^\eps_t-X^\eps_s|].
$$ 
Moreover,
\begin{align*}
\mathbb E [|X^\eps_t-X^\eps_s|]&\leq \mathbb E[\int_s^t |v_\eps(\tau,X^\eps_\tau)|d\tau]+\mathbb E\left[\left|\int_s^t \Sigma_\eps(X^\ep_\tau)dB_\tau\right|\right]\\
&\leq \int_s^t\int_{\T^d}|v_\eps(\tau,x)| m_\eps(\tau,x)dx d\tau+\left(\mathbb E\left[\int_s^t \Sigma_\eps\Sigma_\eps^*(X^\ep_\tau)d\tau\right]\right)^{1/2}\\
&\leq \int_s^t\int_{\T^d}|v_\eps(\tau,x)| m_\eps(\tau,x)dx d\tau+\| A\|_\infty C |t-s|^\frac{1}{2}.
\end{align*}
Recalling the definition of $v_\eps$, we have  that $m_\eps |v_\eps|^{r'}= \frac{|w_\eps|^{r'}}{m_\eps^{r'-1}}$ belongs to $L^1([0,T]\times\T^d)$ for all $\eps>0$. Indeed, the function $(m,w)\mapsto \frac{|w|^{r'}}{m^{r'-1}}$ is convex and $\frac{|w|^{r'}}{m^{r'-1}}$ belongs to $L^1([0,T]\times\T^d)$. Thus 
$$
\int_{0}^{T }\int_{\T^d} \frac{|\xi^\eps \star w|^{r'}}{(\xi^\eps\star m)^{r'-1}}  dx d\tau\leq\int_{0}^{T} \int_{\T^d} \xi^\eps\star \left(\frac{|w|^{r'}}{m^{r'-1}}\right) dx d\tau\leq \left\|\frac{|w|^{r'}}{m^{r'-1}} \right\|_1.$$ 

Therefore, using H\"older inequality, 
\begin{align*}
{\bf d_1 }(m_\eps(t),m_\eps(s))
&\leq \left(\int_s^t\int_{\T^d}|v_\eps(\tau,x)| ^{r'}m_\eps(\tau,x)dx d\tau\right)^{\frac{1}{r'}}\left(\int_s^t\int_{\T^d} m_{\eps}(\tau,x)dxd\tau\right)^\frac{1}{r}+\|A\|_\infty C |t-s|^\frac{1}{2}\\
&\leq  \left\|\frac{|w|^{r'}}{m^{r'-1}} \right\|_1^{\frac{1}{r'}} |t-s|^{\frac{1}{r}}+\|A\|_\infty |t-s|^\frac{1}{2}. 
\end{align*}
Letting $\eps\to 0$ we have $m_\eps\to m$ in $L^1([0,T]\times\T^d)$ and for a.e. $\tau \in [0,T],$ $ m_\eps(\tau )\to m(\tau)$ in $L^1(\T^d)$, moreover for a.e. $0\leq s<t\leq T$
$$
\lim_{\eps\to 0 } {\bf d_1 }(m_\eps(t),m_\eps(s))={\bf d_1 }(m(t),m(s)).
$$ 
%and $\lim_{\eps\to 0 }\|\xi^\eps\star \tilde A\|_\infty= \|A\|_\infty$. 
Thus for a.e. $0\leq s<t\leq T$
$$
{\bf d_1 }(m(t),m(s))\leq C |t-s|^{\frac{1}{r}} + \|A\|_\infty |t-s|^\frac{1}{2}.
$$ 
\end{proof}

The second optimal control problem is the following: 
\be\label{Pb:mw2}
\inf_{(m,w)\in \mathcal K_1} \mathcal B(m,w)\;.
\ee

\begin{Lemma}\label{Lem:dualite} We have
$$
\inf_{\phi\in \mathcal K_0}{\mathcal A}(\phi) = - \min_{(m,w)\in \mathcal K_1} {\mathcal B}(m,w). 
$$
Moreover, the minimum in the right-hand side is achieved by a unique pair $(m,w)\in \mathcal K_1$ satisfying $(m,w)\in  L^q((0,T)\times \T^d)\times L^{\frac{r'q}{r'+q-1}}((0,T)\times \T^d)$. 
\end{Lemma}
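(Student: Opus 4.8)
The plan is to establish the duality by recognizing the two problems as Fenchel--Rockafellar duals. First I would rewrite $\inf_{\phi \in \mathcal K_0} \mathcal A(\phi)$ in the form $\inf_{\phi} \{ \mathcal F(\Lambda \phi) + \mathcal G(\phi) \}$ for a suitable linear operator $\Lambda$ and convex functionals $\mathcal F, \mathcal G$. Concretely, on the Banach space $E_1 = \mathcal C^2([0,T]\times\T^d)$ take $\Lambda \phi = (-\partial_t \phi - A_{ij}\partial_{ij}\phi + H(x,D\phi), \phi(0,\cdot))$ --- though $H$ is nonlinear, the standard trick (as in \cite{c1,cg,CCN}) is to keep the full expression $\alpha := -\partial_t\phi - A_{ij}\partial_{ij}\phi + H(x,D\phi)$ as a single variable and define $\mathcal F$ on $L^p \times \mathcal C^0$ by $\mathcal F(a,\psi) = \int_0^T\int_{\T^d} F^*(x,a) - \int_{\T^d} \psi\, dm_0$ together with the linear operator only through $\phi \mapsto (\text{the pair})$; more precisely one uses the perturbation functional and the fact that $F^*$ is convex with $p$-growth by \eqref{HypGrowthFstar}. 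Then I would compute the convex conjugates: $\mathcal F^*$ evaluated at a pair $(m,\phi_T\text{-dual slot})$ produces exactly $\int_0^T\int F(x,m)$ (since $F^{**}=F$) plus the constraint $m(0) = m_0$, while the conjugate of the ``operator'' part forces the continuity equation $\partial_t m - \partial_{ij}(A_{ij}m) + \dive(w) = 0$ together with the coupling $w = m D_pH(x,D\phi)$ in the limiting sense, and the terminal term $\int_{\T^d}\phi_T m(T)$. This produces $-\mathcal B(m,w)$ on $\mathcal K_1$.

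The heart of the matter is justifying that Fenchel--Rockafellar applies, i.e.\ that there is no duality gap. For this I would check the qualification condition: there must exist $\bar\phi \in \mathcal K_0$ at which $\mathcal A$ is finite and $\mathcal F$ is continuous at the corresponding point. Taking $\bar\phi(t,x) = \phi_T(x)$ (or any smooth function with the right terminal trace) makes $\alpha = -A_{ij}\partial_{ij}\phi_T + H(x,D\phi_T)$ bounded, hence in the interior of the domain of $a\mapsto \int F^*(x,a)$ since $F^*$ is finite and continuous on all of $\R$ (indeed $F^*(x,a)=0$ for $a\le 0$ and has $p$-growth), so the required interior-point condition holds. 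By the Fenchel--Rockafellar theorem the inf equals the negative of the conjugate problem and, crucially, the dual problem (here $\min \mathcal B$) admits a minimizer. So the equality $\inf \mathcal A = -\min \mathcal B$ and existence of an optimal $(m,w)$ follow.

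Next, uniqueness: $\mathcal B$ is convex in $(m,w)$ because $(m,w)\mapsto m H^*(x,-w/m)$ is jointly convex (it is the perspective function of the convex $H^*(x,\cdot)$) and $m\mapsto F(x,m)$ is \emph{strictly} convex by \eqref{HypGrowthF} (strict convexity of $F$ was noted in Section~\ref{sec:notass}), while the terminal term is linear. If $(m_1,w_1)$ and $(m_2,w_2)$ were two minimizers, strict convexity of $F$ forces $m_1 = m_2$ a.e.; then the continuity equation \eqref{conteq}, being linear in $w$ for fixed $m$, gives $\dive(w_1 - w_2) = 0$, and the strict convexity of $H^*$ in the radial direction (via \eqref{HypHstar}) together with the fact that $w\mapsto mH^*(x,-w/m)$ is strictly convex on $\{m>0\}$ forces $w_1 = w_2$ on $\{m>0\}$; on $\{m=0\}$ both $w_i$ vanish by finiteness of $\mathcal B$. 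Hence the minimizer is unique.

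Finally, the integrability $(m,w) \in L^q \times L^{\frac{r'q}{r'+q-1}}$: the bound $m \in L^q$ is immediate from $\mathcal B(m,w) < +\infty$ and the lower bound $F(x,m) \ge \frac{1}{qC_1}m^q - C_1$ in \eqref{HypGrowthF}. For $w$, one has $m|v|^{r'} \in L^1$ from the $H^*$ lower bound \eqref{HypHstar}, and then by H\"older, writing $|w| = |v| m = (|v|^{r'} m)^{1/r'} m^{1/r}$, one gets $w \in L^s$ with $\frac{1}{s} = \frac{1}{r'}\cdot 1 \cdot(\text{weight from }m^{1/r}\in L^{qr})$; carrying out the exponent arithmetic with $m\in L^q$ and $m|v|^{r'}\in L^1$ yields precisely $s = \frac{r'q}{r'+q-1}$. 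The main obstacle I anticipate is the bookkeeping in the conjugate computation --- identifying the correct dual variables so that the abstract conjugate $\mathcal F^*$ really is $\int F(x,m) + \int \phi_T m(T)$ with the continuity-equation constraint, and in particular handling the terminal trace $m(T)$ rigorously (which is exactly why Lemma~\ref{lem:m-cts} was proved first). The nonlinearity $H$ in the operator is dealt with by the standard device of treating $\alpha$ as an independent variable and only recovering the relation $H(x,D\phi)$ inside the functional, so it does not genuinely obstruct the linear-duality machinery.
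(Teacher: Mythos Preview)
Your proposal has the right overall architecture and the arguments for the qualification condition, uniqueness, and the $L^q\times L^{\frac{r'q}{r'+q-1}}$ integrability are all essentially correct and match the paper. The genuine gap is in the Fenchel--Rockafellar setup itself.

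You define $\Lambda\phi = (-\partial_t\phi - A_{ij}\partial_{ij}\phi + H(x,D\phi),\, \phi(0,\cdot))$, but this map is \emph{nonlinear} because of the $H(x,D\phi)$ term, and the Fenchel--Rockafellar theorem requires $\Lambda$ to be a bounded linear operator. You acknowledge the issue (``though $H$ is nonlinear\dots'') and gesture at a fix (``treating $\alpha$ as an independent variable''), but what you write does not actually produce a linear $\Lambda$; introducing $\alpha$ as an independent variable subject to $\alpha = -\partial_t\phi - A_{ij}\partial_{ij}\phi + H(x,D\phi)$ just moves the nonlinearity into a nonlinear constraint. The paper's resolution is different and precise: take the \emph{linear} operator $\Lambda\phi = (\partial_t\phi + A_{ij}\partial_{ij}\phi,\, D\phi)$ into $E_1 = \mathcal C^0([0,T]\times\T^d)\times \mathcal C^0([0,T]\times\T^d,\R^d)$, put the terminal constraint $\phi(T)=\phi_T$ and the term $-\int m_0\phi(0)$ into $\mathcal F$ on $E_0$, and absorb the nonlinearity into the functional $\mathcal G(a,b)=\int_0^T\int_{\T^d} F^*(x,-a+H(x,b))$. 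The dual variable is then a pair $(m,w)\in E_1'$, and the crucial computation is the Fenchel conjugate of $K(x,a,b):=F^*(x,-a+H(x,b))$, which gives $K^*(x,m,w)=F(x,-m)-mH^*(x,-w/m)$ for $m<0$. This is where both $F(x,m)$ and the perspective term $mH^*(x,-w/m)$ emerge simultaneously, and where the variable $w$ appears naturally as the dual of $b=D\phi$ --- something your setup cannot produce because you never isolate $D\phi$ as a separate linear output.

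A smaller point: your uniqueness argument invokes strict convexity of $H^*$ ``via \eqref{HypHstar}'', but \eqref{HypHstar} is only a two-sided growth bound. Strict convexity of $H^*(x,\cdot)$ follows instead from the assumed differentiability of $H(x,\cdot)$ in (H2), which is what the paper uses.
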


\begin{Remark}{\rm Note that $\frac{r'q}{r'+q-1}>1$ because $r'>1$ and $q>1$. 
}\end{Remark}

\begin{proof}
The strategy of proof---which is very close to the corresponding one in \cite{c1, cg, CCN}---consists in applying the Fenchel-Rockafellar duality theorem (cf. e.g., \cite{ET}). In order to do so, it is better to reformulate the first optimization problem \eqref{PB:dual2}  in a more suitable form. 
Let  $E_0= {\mathcal C}^2([0,T]\times \T^d)$ and  $E_1=   {\mathcal C}^0([0,T]\times \T^d, \R)\times {\mathcal C}^0([0,T]\times \T^d, \R^d)$. We define on $E_0$  the functional 
$$
{\mathcal F}(\phi)= -\int_{\T^d} m_0(x)\phi(0,x)dx +\chi_S(\phi),  
$$
where $\chi_S$ is the characteristic function of the set $S=\{\phi\in E_0, \; \phi(T, \cdot)= \phi_T\}$, i.e., $\chi_S(\phi)=0$ if $\phi\in S$ and $+\infty$ otherwise. For $(a,b)\in E_1$, we define
$$
{\mathcal G} (a,b)= \int_0^T\int_{\T^d}  F^*(x,-a(t,x)+H(x,b(t,x)) )\ dxdt \;.
$$
%if $-a(t,x)+H(x,b(t,x))\geq 0$ for any $(t,x)$ and $+\infty$ otherwise. 
The functional  ${\mathcal F}$ is convex and lower semi-continuous on $E_0$ while ${\mathcal G}$ is convex  and continuous on $E_1$. Let $\Lambda:E_0\to E_1$ be the bounded linear operator defined by $\ds \Lambda (\phi)= (\partial_t\phi+A_{ij}\partial_{ij}\phi , D\phi)$. We can observe that 
$$
\inf_{\phi\in \mathcal K_0} \mathcal A(\phi)= \inf_{\phi\in E_0} \left\{ {\mathcal F}(\phi)+{\mathcal G}(\Lambda(\phi))\right\}.
$$
It is easy to verify that the qualification hypothesis, that ensures the stability of the above optimization problem, holds. Indeed, there is a map $\phi$ such that ${\mathcal F}(\phi)<+\infty$ and 
such that ${\mathcal G} $ is continuous at $\Lambda (\phi)$: it is enough to take $\phi(t,x)=\phi_T(x)$. 

%For instance, if we set $M= \min_x H(x,D\phi_T(x))$, then $\phi(t,x)= \phi_T(x)-(M+1)(T-t)$ satisfies these conditions. 

Therefore we can apply the  Fenchel-Rockafellar duality theorem, which states that 
$$
\inf_{\phi\in E_0} \left\{ {\mathcal F}(\phi)+{\mathcal G}(\Lambda(\phi))\right\}
=
\max_{ (m,w)\in E_1'} \left\{ -  {\mathcal F}^*(\Lambda^*(m,w))-{\mathcal G}^*(-(m,w))\right\}
$$
where $E_1'$ is the dual space of $E_1$, i.e., the set of vector valued Radon measures $(m, w)$ over $[0,T]\times \T^d$ with values in $\R\times \R^d$,  $E_0'$ is the dual space of $E_0$, $\Lambda^*: E'_1\to E'_0$ is the dual operator of $\Lambda$ and ${\mathcal F}^*$ and ${\mathcal G}^*$ are the convex conjugates of ${\mathcal F}$ and ${\mathcal G}$ respectively. By a direct computation we have 
$$
{\mathcal F}^*(\Lambda^*(m,w))
= \left\{ \begin{array}{ll}
\ds \int_{\T^d} \phi_T(x)dm(T,x) & {\rm if }\; \partial_t m-\partial_{ij}(A_{ij}m)+{\rm div} (w)=0, \; m(0)=m_0\\
+\infty & {\rm otherwise}
\end{array}\right.
$$
where the equation $\ \partial_t m-\partial_{ij}(A_{ij}m)+{\rm div} (w)=0, \; m(0)=m_0\ $ holds in the sense of distributions. 
Following \cite{c1}, we have ${\mathcal G}^*(m,w)=+\infty$ if $(m,w)\notin L^1$ and, if $(m,w)\in L^1$, 
$$
{\mathcal G}^*(m,w)= \int_0^T\int_{\T^d} K^*(x,m(t,x),w(t,x))dtdx ,
$$
where $$K^*(x,m,w)=\left\{\begin{array}{ll}F(x,-m)-mH^*(x,-\frac{w}{m})& \text{ if } m<0 \\
0& \text{ if } m=0,w=0\\
+\infty & \text{ otherwhise} \end{array}\right.$$ is the convex conjugate of  $$ K (x,a,b)=F^*(x,-a+H(x,b))\quad \forall (x,a,b)\in\T^d\times\R\times\R^d.$$
Therefore
$$
\begin{array}{l}
\ds \max_{ (m,w)\in E_1'} \left\{ -  {\mathcal F}^*(\Lambda^*(m,w))-{\mathcal G^*}(-(m,w))\right\}\\
\qquad \qquad \qquad \ds = \max \left\{ \int_0^T\int_{\T^d}  -F(x,m ) -m H^*( x, -\frac{w}{m})\ dtdx  -\int_{\T^d} \phi_T(x) m(T,x)\ dx \right\}
 \end{array}
$$
where the last maximum is taken over the $L^1$ maps $(m,w)$ such that $m\geq 0$ a.e. and 
$$
\partial_t m-\partial_{ij}(A_{ij}m)+{\rm div} (w)=0,\; m(0)=m_0\,
$$
holds in the sense of distributions.
Since $\ds\ \int_{\T^d} m_0=1\ $, it follows that $\ds\ \int_{\T^d} m(t)=1\ $ for any $t\in [0,T]$. Thus the pair $(m,w)$ belongs to the set $\mathcal K_1$ and the first part of the statement is proved.

Take now   an optimal $(m, w)\in \mathcal K_1$ in the above system. Observe that due to optimality  we have $w(t,x)=0$ for all $(t,x)\in [0,T]\times\T^d$ such that $m(t,x)=0$. The growth conditions \eqref{HypGrowthH} and \eqref{HypGrowthF} imply 
$$
\begin{array}{rl}
 C \; \geq & \ds  \int_0^T\int_{\T^d}  F(x,m ) +m H^*( x, -\frac{w}{m})\ dtdx  +\int_{\T^d} \phi_T(x) m(T,x)\ dx \\
  \geq & \ds   \int_0^T\int_{\T^d}  \left(\frac{1}{ C}|m|^{q}+
 \frac{m}{ C } \left|\frac{w}{m}\right|^{r'} - C(m+1) \right) dxdt -\|\phi_T\|_\infty .
 \end{array}
 $$
Therefore $m\in L^q$.  Moreover, by H\"older inequality, we also have 
$$
\int_0^T\int_{\T^d} |w|^{\frac{r'q}{r'+q-1}} = \int\int_{\{m>0\}} |w|^{\frac{r'q}{r'+q-1}} \leq \|m\|_q^{\frac{r'-1}{r'+q-1}} \left(\int\int_{\{m>0\}} \frac{|w|^{r'}}{m^{r'-1}} \right)^{\frac{q}{r'+q-1}} \leq C
$$
so that $w\in L^{\frac{r'q}{r'+q-1}}$. Finally, a minimizer to \eqref{Pb:mw2} should be unique, because the set $\mathcal K_1$ is convex and the maps $F(x,\cdot)$ and $H^*(x,\cdot)$  are strictly convex: thus $m$ is unique and so is $\ds \frac{w}{m}$ in $\{m>0\}$. As $w=0$ in $\{m=0\}$, uniqueness of $w$ follows as well. 
\end{proof}

%%%%%%%%%%%%%%%%%%%%%%%%%%%%%%%%%%%%%%%%%%%%%%%%
%%%%%%%%%%%%%%%%%%%%%%%%%%%%%%%%%%%%%%%%%%%%%%%%
%%%%%%%%%%%%%%%%%%%%%%%%%%%%%%%%%%%%%%%%%%%%%%%%
\section{Analysis of the optimal control of the HJ equation}\label{sec:optiHJ}

In general, we do not expect problem \eqref{PB:dual2} to have a solution. In this section we exhibit a relaxation for  \eqref{PB:dual2}  (Proposition \ref{prop:valeursegales}) and show that this relaxed problem has at least one solution (Proposition \ref{Prop:existence}).

%%%%%%%%%%%%%%%%%%%%%%%%%%%%%%%%%%%%%%%%%%%%%%%%
\subsection{The relaxed problem}

Recall that the exponents $\eta>1$ and  $\gamma>1$ are defined in Theorem \ref{thm:estiHJ}. 
Let ${\mathcal K}$ be the set of pairs $(\phi,\alpha)\in L^\gamma((0,T)\times \T^d)\times L^p((0,T)\times \T^d)$ such that $D\phi\in L^r((0,T)\times \T^d)$ and which satisfy in the sense of distributions
\be\label{ineq:phi}
-\partial_t \phi-A_{ij}(x)\partial_{ij}\phi+H(x,D\phi) \leq \alpha, \qquad \phi(T,\cdot)\leq \phi_T
\ee
(for the precise meaning of the inequality, see the beginning of Section \ref{sec:esti}). 
The following statement explains that $\phi$ has a ``trace" in a weak sense. 
\begin{Lemma}\label{lem:intem0phiBV} Let $(\phi,\alpha)\in {\mathcal K}$. Then, for any Lipschitz continuous map $\zeta: \T^d\to \R$, the map $\ds t\to \int_{\T^d} \zeta(x)\phi(t,x)dx$ has a BV representative on $[0,T]$. Moreover, if we denote by $\ds \int_{\T^d} \zeta(x)\phi(t^+,x)dx$ its right limit at $t\in [0,T)$, then 
the map $\ds \zeta\to \int_{\T^d} \zeta(x)\phi(t^+,x)dx$ is continuous in $L^{\eta'}(\T^d)$. 
\end{Lemma}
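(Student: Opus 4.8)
The plan is to test the distributional inequality \eqref{ineq:phi} against well-chosen test functions in order to produce a monotone-type estimate for $t\mapsto \int_{\T^d}\zeta(x)\phi(t,x)\,dx$, and then combine this with the integrability of $\phi$ (from Theorem \ref{thm:estiHJ}) to upgrade to BV and to get the continuity of the trace map.

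First I would fix a nonnegative Lipschitz $\zeta$ and test the inequality $-\partial_t\phi - A_{ij}\partial_{ij}\phi + H(x,D\phi)\le\alpha$ against $\zeta(x)\theta(t)$ for nonnegative $\theta\in C_c^\infty((0,T))$. Integrating by parts in $x$ (using the divergence form of the second-order term, which is the whole point of writing it as $-\partial_i(A_{ij}\partial_j\phi) - \partial_i A_{ij}\partial_j\phi$, so that all $x$-derivatives fall on $\zeta$, which is only Lipschitz) yields
\[
-\int_0^T\theta'(t)\Big(\int_{\T^d}\zeta\phi\,dx\Big)\,dt \le \int_0^T\theta(t)\Big(\int_{\T^d}\big[\alpha\zeta + \langle D\zeta, AD\phi\rangle + \zeta\,\partial_iA_{ij}\partial_j\phi - \zeta H(x,D\phi)\big]dx\Big)dt.
\]
The key observation is that the right-hand side integrand is a fixed $L^1(0,T)$ function of $t$: indeed $\alpha\in L^p$, $D\phi\in L^r$, $A$ and $D\zeta$ are bounded, $\partial_iA_{ij}$ is bounded (Lipschitz $A$), and $H(x,D\phi)\in L^1$ by the growth bound \eqref{HypGrowthH} since $D\phi\in L^r$. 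Call this function $h_\zeta(t)\in L^1(0,T)$. The displayed inequality says precisely that the distribution $-\frac{d}{dt}g_\zeta$ is $\le h_\zeta$ in $\mathcal D'(0,T)$, where $g_\zeta(t):=\int_{\T^d}\zeta\phi\,dx$ — equivalently, $g_\zeta(t) + \int_0^t h_\zeta$ is nonincreasing (as a distribution with nonnegative derivative, hence has a monotone, in particular BV, representative). Writing a general Lipschitz $\zeta$ as a difference of two nonnegative Lipschitz functions (add a large constant), $g_\zeta$ is a difference of two functions each of which is BV, hence $g_\zeta$ has a BV representative on $[0,T]$; this proves the first assertion.

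For the second assertion I would note that, having a BV representative, $g_\zeta$ has a right limit $g_\zeta(t^+)=\int_{\T^d}\zeta\phi(t^+,\cdot)$ at every $t\in[0,T)$, and the decomposition above shows $g_\zeta(t^+) = \lim_{\tau\downarrow t}\frac1{\text{small interval}}$-type averages, so $\zeta\mapsto g_\zeta(t^+)$ is well-defined and linear. To see it is continuous in $L^{\eta'}(\T^d)$, I would estimate $|g_\zeta(t^+)|$: from monotonicity, $g_\zeta(t^+)\le g_\zeta(s) + \|h_\zeta\|_{L^1(s,t)}$ for a.e. $s<t$, and passing to a suitable average over $s$ near $t$ one controls $g_\zeta(t^+)$ by $\int_{\T^d}|\zeta|\,|\phi(s,\cdot)|\,dx$ plus a tail of $\|h_\zeta\|_{L^1}$; since $\phi\in L^\infty((0,T);L^\eta(\T^d))$ by Theorem \ref{thm:estiHJ}, Hölder gives $\int_{\T^d}|\zeta\phi(s)| \le \|\zeta\|_{\eta'}\|\phi(s)\|_\eta \le C\|\zeta\|_{\eta'}$, and the $h_\zeta$-tail is likewise linearly controlled by $\|\zeta\|_{\infty}$ hence by $\|\zeta\|_{W^{1,\infty}}$ and by $\|\alpha\|_p,\|D\phi\|_r$. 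One then gets a bound $|g_\zeta(t^+)| \le C\|\zeta\|_{\eta'}$ for Lipschitz $\zeta$ and concludes by density of Lipschitz functions in $L^{\eta'}(\T^d)$ that $\zeta\mapsto g_\zeta(t^+)$ extends to a bounded linear functional on $L^{\eta'}$, i.e. is continuous there.

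**Main obstacle.** The delicate point is the continuity estimate $|g_\zeta(t^+)|\le C\|\zeta\|_{L^{\eta'}}$: the crude bound on $h_\zeta$ involves $\|D\zeta\|_\infty$, which does \emph{not} control $\|\zeta\|_{\eta'}$ in the right direction, so one cannot simply bound the $h_\zeta$-tail by $\|\zeta\|_{\eta'}$. The resolution is to choose the averaging interval in $s$ shrinking to $t$: the $h_\zeta$-contribution $\int_s^t h_\zeta \to 0$ as $s\uparrow t$ (dominated convergence, $h_\zeta\in L^1$), so in the limit only the term $\int_{\T^d}\zeta\phi(s)\,dx \le \|\zeta\|_{\eta'}\|\phi\|_{L^\infty_tL^\eta_x}$ survives, at the cost of choosing $s$ within a full-measure set of times where $\phi(s,\cdot)\in L^\eta$ with uniformly bounded norm. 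Making this limiting argument clean — i.e. extracting a single sequence $s_k\uparrow t$ that is simultaneously a Lebesgue point of $h_\zeta$-type integrals, a point of the $L^\infty_tL^\eta_x$ bound, and along which $g_\zeta(s_k)\to g_\zeta(t^+)$ — is the real work; everything else is routine integration by parts and Hölder.
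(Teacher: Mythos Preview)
Your proposal is correct and follows essentially the same route as the paper. The paper's proof is much terser: for nonnegative Lipschitz $\zeta$ it observes that $-\frac{d}{dt}\int_{\T^d}\zeta\phi(t)$ is bounded above (in $\mathcal D'$) by an $L^1(0,T)$ function, hence $t\mapsto\int_{\T^d}\zeta\phi(t)$ is BV; for sign-changing $\zeta$ it splits $\zeta=\zeta^+-\zeta^-$; and for the $L^{\eta'}$-continuity it simply invokes the $L^\infty((0,T);L^\eta(\T^d))$ bound on $\phi$ from Theorem~\ref{thm:estiHJ}. Your ``main obstacle'' is not really an obstacle: since $g_\zeta$ is BV, $g_\zeta(t^+)$ is the limit of $g_\zeta(s_k)$ along any sequence $s_k\downarrow t$ in the full-measure set where $\|\phi(s_k)\|_{L^\eta}\le C$, so $|g_\zeta(t^+)|\le C\|\zeta\|_{\eta'}$ follows immediately by H\"older without ever needing to control the $h_\zeta$-tail in terms of $\|\zeta\|_{\eta'}$.
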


As a consequence,  for any nonnegative $C^1$ map $\vartheta:[0,T]\times \T^d\to \R$, one can write the integration by parts formula: for any $0\leq t_1\leq t_2\leq T$, 
$$
-\left[\int_{\T^d}\vartheta\phi\right]_{t_1}^{t_2} + \int_{t_1}^{t_2}\int_{\T^d} \phi\partial_t\vartheta+ \lg D\vartheta, AD\phi\rg + \vartheta(\partial_iA_{ij}\partial_j\phi+ H(x,D\phi)) \leq \int_{t_1}^{t_2}\int_{\T^d} \alpha \vartheta.
$$

\begin{proof}[Proof of Lemma \ref{lem:intem0phiBV}.] One easily checks that, for any Lipschitz continuous, nonnegative map $\zeta: \T^d\to \R$, 
$$
- \frac{d}{dt} \int_{\T^d} \zeta \phi(t)+ \int_{\T^d} \lg D\zeta,AD\phi(t)\rg +\zeta (\partial_iA_{ij}\partial_j\phi+ H(x,D\phi) -\alpha ) \leq 0, 
$$
holds in the sense of distributions. As the second integral is in $L^1((0,T))$, the map $t\to \int_{\T^d} \zeta \phi(t)$ is BV. If now $\zeta$ is Lipschitz continuous and changes sign, one can write $\zeta=\zeta^+-\zeta^-$ and the map $t\to \int_{\T^d} \zeta \phi(t)=  \int_{\T^d} \zeta^+ \phi(t)-  \int_{\T^d} \zeta^- \phi(t)$ is still BV. The continuity with respect to $\zeta$ comes from the $L^\infty((0,T), L^\eta(\T^d))$ estimate on $\phi$ given in Theorem \ref{thm:estiHJ}.  
\end{proof}

We extend the functional $\mathcal A$ to  ${\mathcal K}$ by setting 
$$
\mathcal A(\phi,\alpha) = \int_0^T\int_{\T^d}  F^*(x,\alpha(x,t))\ dxdt - \int_{\T^d} \phi(x,0)m_0(x)\ dx\qquad \forall (\phi, \alpha)\in {\mathcal K}.
$$
The next proposition explains that the problem
\be\label{PB:dual-relaxed}
\inf_{(\phi, \alpha)\in {\mathcal K}} \mathcal A(\phi,\alpha)
\ee
is the relaxed problem of \eqref{PB:dual2}. For this we first note that
\be\label{eq:positif}
\inf_{(\phi, \alpha)\in {\mathcal K}} \mathcal A(\phi,\alpha)= \inf_{(\phi, \alpha)\in {\mathcal K}, \ \alpha\geq0\ {\rm a.e.}} \mathcal A(\phi,\alpha)
\ee
because one can always replace $\alpha$ by $\alpha\vee0$ since $F^*(x,\alpha)= 0$ for $\alpha\leq0$. 

\begin{Proposition}\label{prop:valeursegales} We have
$$
 \inf_{\phi\in \mathcal K_0} \mathcal A(\phi)= \inf_{(\phi, \alpha)\in {\mathcal K}} \mathcal A(\phi,\alpha).
 $$
\end{Proposition}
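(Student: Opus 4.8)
The inequality $\inf_{(\phi,\alpha)\in{\mathcal K}}{\mathcal A}(\phi,\alpha)\le\inf_{\phi\in{\mathcal K}_0}{\mathcal A}(\phi)$ is immediate: any $\phi\in{\mathcal K}_0$ gives a pair $(\phi,\alpha)\in{\mathcal K}$ with $\alpha:=-\partial_t\phi-A_{ij}\partial_{ij}\phi+H(x,D\phi)$, for which ${\mathcal A}(\phi,\alpha)={\mathcal A}(\phi)$; one must only check $(\phi,\alpha)\in{\mathcal K}$, i.e.\ that $\phi\in L^\gamma$, $D\phi\in L^r$ and $\alpha\in L^p$, which hold since $\phi\in{\mathcal C}^2$ and the torus is compact. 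So the content of the proposition is the reverse inequality, which amounts to an approximation (density/relaxation) statement: every $(\phi,\alpha)\in{\mathcal K}$ can be approximated by smooth subsolutions, and then by elements of ${\mathcal K}_0$, without increasing the value of ${\mathcal A}$ in the limit.

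The plan is as follows. Fix $(\phi,\alpha)\in{\mathcal K}$; by \eqref{eq:positif} we may assume $\alpha\ge0$ a.e. First I would mollify in space and time (after extending $\phi$ to $[T,T+1]$ by $\phi_T$ and to $[-1,0]$ appropriately, or more simply using a one-sided-in-time kernel so as to preserve the terminal condition) to get $\phi_\varepsilon\in{\mathcal C}^\infty$ with $D\phi_\varepsilon\to D\phi$ in $L^r$, $\phi_\varepsilon\to\phi$ in $L^\gamma$; as in the proof of Theorem \ref{int-reg}, using convexity of $\xi\mapsto H(x,\xi)$ (Jensen) together with the continuity of $H$ and the commutator estimate of \cite[Lemma II.1]{DL89} for the term $\partial_i(A_{ij}\partial_j\phi)$, one gets
$$
-\partial_t\phi_\varepsilon-A_{ij}\partial_{ij}\phi_\varepsilon+H(x,D\phi_\varepsilon)\le\alpha_\varepsilon+R_\varepsilon,\qquad\phi_\varepsilon(T,\cdot)\le(\phi_T)_\varepsilon,
$$
with $\alpha_\varepsilon=\xi^\varepsilon\star\alpha\to\alpha$ in $L^p$ and $R_\varepsilon\to0$ in $L^r$ (hence, after a further harmless regularization, in $L^p$ by interpolation with the $L^\infty$ gain from mollification, or simply by accepting an $L^{\min(r,p)}$ error and noting $F^*$ is controlled on $L^p$ — here one keeps $r\ge p$ from (H3), so $R_\varepsilon\to 0$ in $L^p$ is automatic when $A\not\equiv0$, and when $A\equiv0$ there is no $R_\varepsilon$ at all). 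Next, replace $\phi_\varepsilon$ by $\tilde\phi_\varepsilon:=\phi_\varepsilon+(\text{correction})$ to restore the exact terminal condition $\tilde\phi_\varepsilon(T,\cdot)=\phi_T$: since $(\phi_T)_\varepsilon\to\phi_T$ in ${\mathcal C}^2$ (as $\phi_T\in{\mathcal C}^2$), adding the time-independent function $\phi_T-(\phi_T)_\varepsilon$, or a suitable space-time smooth function vanishing near $t=0$, perturbs the left-hand side by a quantity small in $L^p$. One then defines $\tilde\alpha_\varepsilon:=\big(-\partial_t\tilde\phi_\varepsilon-A_{ij}\partial_{ij}\tilde\phi_\varepsilon+H(x,D\tilde\phi_\varepsilon)\big)\vee0\le(\alpha_\varepsilon+R_\varepsilon+o(1))\vee0$, so $\tilde\alpha_\varepsilon\to\alpha$ in $L^p$; now $\tilde\phi_\varepsilon\in{\mathcal K}_0$ and $(\tilde\phi_\varepsilon,\tilde\alpha_\varepsilon)$ is an admissible competitor. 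Finally, pass to the limit in ${\mathcal A}$: the term $\int_{\T^d}\tilde\phi_\varepsilon(0,\cdot)m_0\to\int_{\T^d}\phi(0^+,\cdot)m_0$ — here one invokes Lemma \ref{lem:intem0phiBV} to make sense of $\phi(0^+,\cdot)$ and a lower-semicontinuity argument (the mollified trace at $t=0$ converges to the right trace, possibly only after noting $\int_{\T^d}\zeta\phi_\varepsilon(0)\to\int_{\T^d}\zeta\phi(0^+)$ for the mollifier supported in $t\ge0$, using BV continuity from the right) — while $\int\!\!\int F^*(x,\tilde\alpha_\varepsilon)\to\int\!\!\int F^*(x,\alpha)$ by the $L^p$ convergence and the growth \eqref{HypGrowthFstar} (dominated convergence, up to a subsequence with a.e.\ convergence and an $L^p$-equi-integrable majorant). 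Hence $\limsup_\varepsilon{\mathcal A}(\tilde\phi_\varepsilon)\le{\mathcal A}(\phi,\alpha)$, giving $\inf_{{\mathcal K}_0}{\mathcal A}\le{\mathcal A}(\phi,\alpha)$; taking the infimum over $(\phi,\alpha)\in{\mathcal K}$ finishes the proof.

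The main obstacle I anticipate is the behavior of the trace term $\int_{\T^d}\phi(0,\cdot)m_0$ under the approximation: one needs the mollification to converge to the \emph{right} value $\int_{\T^d}\phi(0^+,\cdot)m_0$ (not some average across $t=0$), which forces a careful, one-sided choice of the time mollifier and an appeal to Lemma \ref{lem:intem0phiBV} — and, since only a BV (one-sided-limit) trace is available rather than an $L^\infty_t L^\eta_x$ continuous one, one must argue semicontinuity rather than plain convergence. A secondary technical point is controlling the commutator remainder $R_\varepsilon$ in $L^p$ rather than merely $L^r$; this is exactly where the structural hypothesis \eqref{CondCroiss} ($r\ge p$ or $A\equiv0$) is used, so the argument splits into the degenerate-free case (no commutator) and the case $r\ge p$ (where $L^r\hookrightarrow$ suffices after noting $R_\varepsilon$ is bounded and convergent in $L^r\subset L^p$ on the bounded domain). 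Everything else is routine mollification and dominated convergence using the two-sided power growth of $F^*$ and $H$.
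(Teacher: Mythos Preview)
Your approach---direct approximation by mollification---is plausible and can likely be made rigorous, but it is \emph{not} the route the paper takes, which is much shorter. The paper argues by duality: given $(\phi,\alpha)\in\mathcal K$ and any $(m,w)\in\mathcal K_1$ with $mH^*(\cdot,-w/m)\in L^1$, one combines the Fenchel inequality $F^*(x,\alpha)+F(x,m)\ge \alpha m$ with Lemma~\ref{lem:phialphamw} (which gives $\int_{\T^d} m_0\phi(0)\le\int_{\T^d} m(T)\phi_T+\int_0^T\!\int_{\T^d} m(\alpha+H^*(x,-w/m))$) to obtain $\mathcal A(\phi,\alpha)\ge -\mathcal B(m,w)$; taking the supremum over $(m,w)$ and invoking Lemma~\ref{Lem:dualite} yields $\mathcal A(\phi,\alpha)\ge -\min_{\mathcal K_1}\mathcal B=\inf_{\mathcal K_0}\mathcal A$. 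No regularization of $(\phi,\alpha)$ is needed at all; the mollification work has already been packaged into Lemma~\ref{lem:phialphamw}, where it is applied to $(m,w)$ rather than to $\phi$.

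Two technical points in your sketch deserve comment. First, Jensen for $H(x,\xi)$ with $x$-dependence does not directly give $H(x,\xi^\varepsilon\!\star D\phi)\le \xi^\varepsilon\!\star H(\cdot,D\phi)$: there is an extra error from the $x$-variation of $H$, which must be controlled via the growth bound \eqref{HypGrowthH} and the $L^r$ bound on $D\phi$. Second, your claim ``$\tilde\alpha_\varepsilon\to\alpha$ in $L^p$'' is too strong---you only obtain the one-sided pointwise bound $0\le\tilde\alpha_\varepsilon\le(\alpha_\varepsilon+R_\varepsilon+o(1))_+$; however, since $F^*(x,\cdot)$ is nondecreasing this still yields $\limsup\int\!\!\int F^*(x,\tilde\alpha_\varepsilon)\le\int\!\!\int F^*(x,\alpha)$, which suffices. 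The duality approach buys brevity and sidesteps all of this; your approach has the virtue of being constructive (it exhibits an approximating sequence in $\mathcal K_0$) but is substantially heavier.
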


The proof requires the following inequality: 

\begin{Lemma}\label{lem:phialphamw} Let $(\phi,\alpha)\in {\mathcal K}$ and $(m,w)\in {\mathcal K}_1$. Assume that $mH^*(\cdot, -w/m)\in L^1((0,T)\times \T^d)$ and $m \in \pelle q$. Then %for a.e. $t\in [0,T]$,  $m(t)\phi(t)$ is integrable and  
\be\label{ineq:ineqfirst}
\left[ \int_{\T^d} m \phi\right]_t^{T} + \int_t^{T}\int_{\T^d} m \left(\alpha  + H^*(x,-\frac{w}{m})\right) \; \geq \; 0
\ee
and 
$$
\left[ \int_{\T^d} m \phi\right]_0^{t} + \int_0^{t}\int_{\T^d} m \left(\alpha  + H^*(x,-\frac{w}{m})\right) \; \geq \; 0.
$$
Moreover, if equality holds in the inequality \eqref{ineq:ineqfirst} for $t=0$, then $w= -mD_pH(x,D\phi)$ a.e. 
\end{Lemma}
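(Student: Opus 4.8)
The starting point is that \eqref{ineq:ineqfirst} is the rigorous form of a formal computation: multiply the Hamilton--Jacobi inequality \eqref{ineq:phi} by the nonnegative density $m$, multiply the Fokker--Planck equation \eqref{conteq} by $\phi$, integrate by parts on $(t,T)\times\T^d$ so that the time derivatives and the second order terms cancel, and invoke Fenchel's inequality. Formally,
\begin{align*}
\int_t^{T}\!\int_{\T^d} m\,\alpha\ &\ge\ \int_t^{T}\!\int_{\T^d} m\,(-\partial_t\phi-A_{ij}\partial_{ij}\phi+H(x,D\phi))\\
&=\ -\left[\int_{\T^d} m\,\phi\right]_t^{T}+\int_t^{T}\!\int_{\T^d}\lg D\phi,w\rg+m\,H(x,D\phi),
\end{align*}
the last line using \eqref{conteq} to rewrite $\int_{\T^d}\phi\,(\partial_t m-\partial_{ij}(A_{ij}m))=\int_{\T^d}\lg D\phi,w\rg$. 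The pointwise Fenchel inequality $\lg D\phi,w\rg+m\,H(x,D\phi)+m\,H^*(x,-w/m)\ge0$ a.e. — an equality if and only if $-w/m=D_pH(x,D\phi)$ on $\{m>0\}$, since $H(x,\cdot)$ is differentiable — then gives exactly \eqref{ineq:ineqfirst}, and the same computation on $(0,t)$ gives the second inequality.

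To justify this one cannot pair $m\in\pelle q$ with $\partial_{ij}\phi$, so I would regularize $\phi$. Extend $\phi$ by $\phi_T$ for $t>T$, take an even space-time mollifier $\xi^\eps$ as in the proof of Theorem \ref{int-reg}, and put $\phi_\eps=\xi^\eps\star\phi$. The commutator argument of Theorem \ref{int-reg} (\cite[Lemma II.1]{DL89}, using that $A$ is Lipschitz and $D\phi\in L^r$) yields
$$-\partial_t\phi_\eps-\partial_i(A_{ij}\partial_j\phi_\eps)+\xi^\eps\star[(\partial_iA_{ij})\partial_j\phi]+\xi^\eps\star H(x,D\phi)\ \le\ \alpha_\eps+R_\eps,$$
with $R_\eps\to0$ in $\pelle r$ and $\alpha_\eps\to\alpha$ in $\pelle p$. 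Multiplying by $m\ge0$, integrating over $(t,T)\times\T^d$, using the smooth $\phi_\eps$ as a test function in \eqref{conteq} (the boundary terms at $t$ and $T$ being meaningful by Lemma \ref{lem:m-cts}, and by $\phi(T,\cdot)\le\phi_T$, $m\ge0$ at the endpoint $T$), and using that $\xi^\eps$ is even so that $\int_{\T^d}m\,(\xi^\eps\star g)=\int_{\T^d}(\xi^\eps\star m)\,g$, one arrives after rearranging at
$$\int_t^{T}\!\int_{\T^d}\lg D\phi,w_\eps\rg+m_\eps\,H(x,D\phi)\ \le\ \left[\int_{\T^d} m\,\phi_\eps\right]_t^{T}+\int_t^{T}\!\int_{\T^d}m(\alpha_\eps+R_\eps)+\rho_\eps,$$
where $m_\eps=\xi^\eps\star m$, $w_\eps=\xi^\eps\star w$ and $\rho_\eps\to0$ collects first order remainders of the form $\int(m-m_\eps)(\partial_iA_{ij})\partial_j\phi$ and $\int m(\partial_iA_{ij})(\partial_j\phi-\partial_j\phi_\eps)$: here the hypothesis $r\ge p$ (or $A\equiv0$) from \eqref{CondCroiss} is exactly what makes $q\ge r'$ and kills these terms.

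Then I would pass to the limit. On the left, Fenchel's inequality at the level $\eps$ gives $\lg D\phi,w_\eps\rg+m_\eps H(x,D\phi)\ge-m_\eps H^*(x,-w_\eps/m_\eps)$, and the joint convexity of $(m,w)\mapsto m\,H^*(x,-w/m)$ together with Jensen's inequality — freezing the $x$ inside $H^*$ and controlling the error via the continuity of $H^*$ in $x$ and the bound $\int m|w/m|^{r'}<\infty$, as in \cite{c1,cg} — gives $\limsup_\eps\int_t^{T}\!\int_{\T^d} m_\eps H^*(x,-w_\eps/m_\eps)\le\int_t^{T}\!\int_{\T^d} m\,H^*(x,-w/m)<\infty$. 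On the right, $\int m\alpha_\eps\to\int m\alpha$ and $\int mR_\eps\to0$ ($\pelle q\times\pelle p$ and $\pelle q\times\pelle r$ pairings, licit since $q\ge r'$), while $[\int_{\T^d}m\phi_\eps]_t^{T}\to[\int_{\T^d}m\phi]_t^{T}$ by the $\limitate\eta$ estimate of Theorem \ref{thm:estiHJ} and the weak trace of Lemma \ref{lem:intem0phiBV}, the pairings being licit because $\eta\ge q'$ and $\gamma\ge q'$ ($q'$ the conjugate of $q$). This proves \eqref{ineq:ineqfirst}, and the $(0,t)$ version follows by localizing with a smooth time cut-off. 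For the equality case, if equality holds in \eqref{ineq:ineqfirst} at $t=0$ then in addition $\liminf_\eps\int m_\eps H^*(x,-w_\eps/m_\eps)\ge\int m\,H^*(x,-w/m)$ by weak lower semicontinuity of the convex integral functional; hence $\limsup_\eps\int_0^{T}\!\int_{\T^d}(\lg D\phi,w_\eps\rg+m_\eps H(x,D\phi)+m_\eps H^*(x,-w_\eps/m_\eps))\le0$, and weak lower semicontinuity of this nonnegative convex integrand forces $\lg D\phi,w\rg+m\,H(x,D\phi)+m\,H^*(x,-w/m)=0$ a.e., i.e. equality in Fenchel's inequality a.e.; since $H(x,\cdot)$ is differentiable this means $-w/m=D_pH(x,D\phi)$ a.e. on $\{m>0\}$, and as $w=0$ a.e. on $\{m=0\}$ we conclude $w=-m\,D_pH(x,D\phi)$ a.e.

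The main obstacle I expect is the treatment of the boundary terms $[\int_{\T^d}m\phi]_t^{T}$: since $\phi$ need not be $BV$ — only the spatial averages $t\mapsto\int_{\T^d}\zeta\phi(t)$ are — the limit in $[\int_{\T^d}m\phi_\eps]_t^{T}$ must be taken through the weak one-sided traces of Lemma \ref{lem:intem0phiBV}, and one must verify that the Lebesgue exponents $\eta,\gamma,q$ are compatible. A secondary difficulty is that $w$ is a priori only in $\pelle{r'q/(r'+q-1)}$ and not in $\pelle{r'}$, which is why the term $\lg D\phi,w\rg$ cannot be handled on its own but must be kept coupled with $m\,H(x,D\phi)$ through Fenchel's inequality until after the passage to the limit.
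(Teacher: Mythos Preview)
Your formal computation and overall strategy are correct, and you rightly identify the boundary terms and the Fenchel coupling as the two delicate points. However, there is a genuine gap in your treatment of the endpoint $T$. You mollify $\phi$ and test the continuity equation with the smooth $\phi_\eps$, which produces the boundary term $\int_{\T^d} m(T)\,\phi_\eps(T)$. Your claimed limit $[\int_{\T^d}m\phi_\eps]_t^{T}\to[\int_{\T^d}m\phi]_t^{T}$ via the $L^\infty((0,T);L^\eta)$ bound and Lemma~\ref{lem:intem0phiBV} implicitly requires $m(T)\in L^{\eta'}$ (equivalently $L^q$, by your exponent check), but $m\in\pelle q$ only gives $m(t)\in\elle q$ for \emph{a.e.}\ $t$; the endpoint $m(T)$ is in general just a probability measure (Lemma~\ref{lem:m-cts}). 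Nor can you argue $\phi_\eps(T)\leq\phi_T+o(1)$: the mollifier averages values $\phi(s)$ for $s<T$, and those are not controlled by $\phi_T$.

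The paper cures this by the dual choice: it mollifies $(m,w)$ and uses the smooth nonnegative $m_\eps$ as a test function in the Hamilton--Jacobi inequality, whose weak form (Section~\ref{sec:esti}) already carries the boundary term $-\int_{\T^d}\zeta(T)\phi_T$. The $T$ endpoint is then $\int_{\T^d}m_\eps(T)\phi_T$, and $m_\eps(T)\to m(T)$ weak* in $P(\T^d)$ pairs with the $C^2$ function $\phi_T$. At the interior time $t$ the paper uses that $\phi$ is bounded below and Fatou's lemma to get the correct one-sided inequality for $\int_{\T^d} m_\eps(t)\phi(t)$; the commutator error lies in $\pelle q$ (not $\pelle r$ as in your version), which pairs with $D\phi\in\pelle r$ again thanks to $r\ge p$. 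The $(0,t)$ inequality is then obtained in a second step, using the just-established integrability of $m(t)\phi(t)$ and the strong $L^1$ convergence $m_\eps\phi\to m\phi$ (since $\phi\in\pelle\gamma$, $m\in\pelle q$, $\gamma\ge p$); your ``smooth time cut-off'' is too vague to replace this. Your lower-semicontinuity treatment of the equality case is a clean alternative to the paper's set-measure argument with the sets $E_\sigma(t)$ and would go through once the approximation is set up by mollifying $(m,w)$ rather than $\phi$.
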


\begin{proof} We first extend the pairs $(m,w)$  to $[-1,T+1]\times \T^d$ by defining $m=m_0$ on $[-1,0]$, $m= m(T)$ on $[T,T+1]$ and $w(s,x)=0$ for $(s,x)\in (-1,0)\cup (T,T+1)\times \T^d$. Note that  $\partial_t m-\partial_{ij}(\tilde A_{ij}(t,x)m)+{\rm div}(w)=0$ on $(-1,T+1)\times \T^d$, where $\tilde A_{ij}(t,x)=A_{ij}(x)$ if $t\in (0,T)$ and $\tilde A_{ij}(t,x)=0$ otherwise.  
Let $\xi^\ep= \xi^\ep(t,x)$ be a smooth convolution kernel with support in $B_\ep$; we smoothen the pair $(m,w)$ in a standard way into $(m_\ep,w_\ep)$.
Then $(m_\ep,w_\ep)$ solves 
\begin{equation}
\label{eq:mep}
\partial_t m_\ep-\partial_{ij} (\tilde A_{ij} m_\ep)+{\rm div}(w_\ep)= \partial_i R_\epsilon \qquad {\rm in}\; (-1/2, T+1/2)
\end{equation}
in the sense of distributions, where
\begin{equation}
R_\epsilon := [\xi^\epsilon,\partial_j \tilde A_{ij}](m) + [\xi^\epsilon,\tilde A_{ij}\partial_j](m).
\end{equation}
Here we use again the  commutator notation (cf. \cite{DL89})
\begin{equation} \label{eq:commutators}
[\xi^\epsilon,c](f) := \xi^\epsilon \star (cf) - c(\xi^\epsilon \star f)\,.
\end{equation}
Invoking \cite[Lemma II.1]{DL89}, we have that $R_\epsilon \to 0$ in $L^q$, since $m \in L^q$ and $\tilde A_{ij} \in W^{1,\infty}$.

Let us fix  time  $t\in (0,T)$ at which $\phi(t^+)=\phi(t^-)= \phi(t)$ in $L^\gamma(\T)$ and $m_\ep(t)$ converges to $m(t)$. 
By the inequality satisfied by $(\phi,\alpha)$, we have  
$$
%\begin{array}{l}
\ds \int_t^{T}\int_{\T^d} \phi \partial_t m_\ep +\partial_i \phi \partial_{j}( \tilde A_{ij}m_\ep)+ m_\epsilon H(x,D\phi) + \int_{\T^d} m_\ep(t) \phi(t) - m_\ep(T)\phi_T\\
%\qquad \ds  
\leq \int_t^{T}\int_{\T^d} \alpha m_\ep\,.
%\end{array}
$$
By \eqref{eq:mep} we have 
$$
 \int_t^{T}\int_{\T^d} \phi \partial_t m_\ep +\partial_i \phi \partial_{j}( \tilde A_{ij}m_\ep)=  \int_t^{T}\int_{\T^d} -\partial_i \phi R_\epsilon + \lg D\phi, w_\ep\rg. 
 $$
On the other hand, by convexity of $H$,  
\be\label{ineq:ineqCV}
\begin{array}{rl}
\ds \int_t^{T}\int_{\T^d} - m_\ep H^*(x,-\frac{w_\ep}{m_\ep}) \; 
\leq & \ds \int_t^{T}\int_{\T^d} \lg w_\ep, D\phi\rg + m_\ep H(x,D\phi) \,.
%\\
%\leq & \ds  \left[ \int_{\T^d} m_\ep \phi\right]_\eta^{T+\eta} + \int_\eta^{T+\eta}\int_{\T^d} \alpha m_\ep.
\end{array}
\ee
Collecting the above (in)equalities we obtain  
$$
\int_{\T^d} m_\ep(t) \phi(t)\leq \inte m_\ep(T)\phi_T +  \int_t^T \int_{\T^d}  m_\ep (\alpha+ H^*(x,-\frac{w_\ep}{m_\ep})) + \partial_j \phi R_\epsilon\, .
$$
By assumption \eqref{CondCroiss} which states that $r\geq p$, and since $D\phi \in L^r$, we have $\ds \iint \partial_j \phi R_\epsilon\to 0$ as $\ep\to 0$.
Following the proof of Lemma 2.7 in \cite{cg} we have
$$
 \int_t^{T}\int_{\T^d} - m_\ep H^*(x,-\frac{w_\ep}{m_\ep}) \to  \int_t^{T}\int_{\T^d} - m H^*(x,-\frac{w}{m}) \qquad 
\mbox{\rm as $\ep\to 0$. }
$$
The continuity of $t\to m(t)$ in $P(\T^d)$ given by Lemma \ref{lem:m-cts} implies the convergence
$$
\int_{\T^d} m_\ep(T) \phi_T\to \int_{\T^d} m(T) \phi_T\,.
$$
Recalling that $\phi$ is bounded below, we finally get by Fatou's  Lemma the inequality
$$
-\|\phi_-\|_\infty+ \int_{\T^d} m(t) (\phi(t)+ \|\phi_-\|_\infty) \leq \inte m(T)\phi_T + \int_t^T \inte m(\alpha+H^*(x, -\frac{w}{m})), 
$$
which implies that  $m(t) \phi(t)$ is integrable with
$$
\inte m(t) \phi(t)\leq \inte m(T)\phi_T + \int_t^T \inte m(\alpha+H^*(x, -\frac{w}{m}))
$$
We can argue similarly in  the time interval $[0,t]$ using that $\inte m_\ep(t) \phi(t) \to \inte m(t) \phi(t)$; this is certainly true, up to  a subsequence,  for a.e. $t$, because $m_\ep \phi$ strongly converges in $\pelle1$ since $\phi\in \pelle \gamma$, $m\in \pelle q$ and $\gamma\geq p$. We obtain then  
$$
\int_{\T^d} m_0 \phi(0)\leq \inte m(t)\phi(t)+  \int_0^t \int_{\T^d}  m (\alpha+ H^*(x,-\frac{w}{m})) .
$$
Let us assume finally that the following equality holds: 
$$
\left[ \int_{\T^d} m \phi\right]_0^{T} + \int_0^{T}\int_{\T^d} m \left(\alpha  + H^*(x,-\frac{w}{m})\right) \; = \; 0. 
$$
Then there is an equality in inequality \eqref{ineq:ineqfirst} for almost all $t$. 
Fix such a $t\in (0,T)$ and let 
$$
E_\sigma(t):= \left\{(s,y)\;, \; s\in [t,T], \; m (H^*(y,-\frac{w}{m}) + H(x,D\phi)) \geq - \lg w, D\phi\rg +\sigma\right\}.
$$ If $|E_\sigma(t)|>0$, then 
for $\ep>0$ small enough, the set 
$$
E_{\ep,\sigma}(t):= \{(s,y)\;, \; s\in [t,T], \; m_\eps (H^*(y,-\frac{w_\ep}{m_\ep}) + H(x,D\phi)) \geq -  \lg w_\ep, D\phi\rg +\sigma/2\}
$$ has a measure larger than $|E_{\sigma}(t)|/2$. Coming back to inequality \eqref{ineq:ineqCV}, we have 
$$
\begin{array}{rl}
\ds \int_t^{T}\int_{\T^d} - m_\ep H^*(x,-\frac{w_\ep}{m_\ep}) \; 
\leq & \ds \int_t^{T}\int_{\T^d} \lg w_\ep, D\phi\rg + m_\ep H(x,D\phi) -|E_{\sigma}(t)|\sigma/4
\end{array}
$$
Then inequality \eqref{ineq:ineqfirst} becomes 
$$
\inte m(t) \phi(t)\leq \inte m(T)\phi_T + \int_t^T \inte m(\alpha+H^*(x, -\frac{w}{m})) -|E_{\sigma}(t)|\sigma/4,
$$
which contradicts the fact that there is an equality in \eqref{ineq:ineqfirst}. So $|E_\sigma(t)|=0$ for any $\sigma$ and for a.e. $t$, which shows that 
$m (H^*(y,-\frac{w}{m}) +H(x,D\phi)) =-  \lg w, D\phi\rg$ a.e. Thus $w= -m D_pH(x,D\phi)$ holds a.e. in $\{m>0\}$ and, as $w=0$ in $\{m=0\}$, a.e. in $(0,T)\times \T^d$. 
\end{proof}

\begin{proof}[Proof of Proposition \ref{prop:valeursegales}] We follow the argument developed by Graber in \cite{Gra14}. 
 Inequality $\ \ds  \inf_{\phi\in \mathcal K_0} \mathcal A(\phi)\geq \inf_{(\phi, \alpha)\in {\mathcal K}} \mathcal A(\phi,\alpha)\ $ being obvious, let us check the reverse one. Let $(\phi,\alpha)\in {\mathcal K}$. For any $(m, w)\in  {\mathcal K}_1$ with $mH^*(\cdot, -\frac{w}{m})\in L^1$, we have, by Lemma \ref{lem:phialphamw}, 
 $$
 \begin{array}{rl}
 \ds   \mathcal A(\phi,\alpha) \; \geq & \ds \int_0^T \int_{\T^d} \alpha m - F(m) - \inte m_0\phi(0) \\
 \geq & \ds \int_0^T \inte -m H^*(x, -\frac{w}{m})- F(m)- \inte m(T)\phi_T = -\mathcal B(m,w)
\end{array}
$$
 Taking the sup with respect to $(m,w)$ in the right-hand side  we obtain thanks to Lemma \ref{Lem:dualite}:
 $$
 \mathcal A(\phi,\alpha) \; \geq \; - \inf_{(m, w)\in {\mathcal K}_1} \mathcal B(m,w) = \inf_{\phi\in \mathcal K_0} \mathcal A(\phi).
 $$
\end{proof}

%%%%%%%%%%%%%%%%%%%%%%%%%%%%%%%%%%%%%%%%%%%%%%%%
\subsection{Existence of a solution for the relaxed problem}

The next proposition explains the interest of considering the relaxed problem \eqref{PB:dual-relaxed} instead of the original one \eqref{PB:dual2}.

\begin{Proposition}\label{Prop:existence} The relaxed problem \eqref{PB:dual-relaxed} has at least one solution $(\phi, \alpha) \in {\mathcal K}$ which is bounded below by a constant depending on $\|\phi_T\|_{C^2}$, on $\|A_{ij}\|_{C^0}$ and on $\|H(\cdot, D\phi_T)\|_\infty$.
\end{Proposition}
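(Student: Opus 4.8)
The plan is to prove existence of a minimizer for the relaxed problem \eqref{PB:dual-relaxed} by the direct method of the calculus of variations, exploiting the estimates of Theorem \ref{thm:estiHJ} for compactness. First I would take a minimizing sequence $(\phi_n, \alpha_n)\in {\mathcal K}$; by \eqref{eq:positif} we may assume $\alpha_n\geq 0$ a.e. Since $\mathcal A(\phi_n,\alpha_n)$ is bounded and $\mathcal A(\phi,\alpha)= \int_0^T\int_{\T^d} F^*(x,\alpha)\,dxdt - \int_{\T^d}\phi(0)m_0$, the growth bound \eqref{HypGrowthFstar} gives a bound on $\|\alpha_n\|_p$ provided we control $\int_{\T^d}\phi_n(0)m_0$ from above. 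This is where the key subtlety lies: $\phi_n$ is not bounded below a priori, so I first replace $\phi_n$ by $\max\{\phi_n,\,\psi\}$ where $\psi$ is an explicit smooth subsolution of the Hamilton–Jacobi inequality --- for instance $\psi(t,x)=\phi_T(x)-(T-t)\left(\|H(\cdot,D\phi_T)\|_\infty+\|A_{ij}\partial_{ij}\phi_T\|_\infty\right)$, which satisfies $-\partial_t\psi - A_{ij}\partial_{ij}\psi + H(x,D\psi)\leq -\partial_t\psi - A_{ij}\partial_{ij}\phi_T + H(x,D\phi_T)\leq 0\leq\alpha_n$ and $\psi(T,\cdot)=\phi_T$. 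One checks that the max of two subsolutions is again a subsolution (with the same $\alpha_n$, using convexity of $H$ and the divergence structure as in the Remark after Theorem \ref{int-reg}), that the replacement does not increase $\mathcal A$ (it only increases $\phi_n(0,\cdot)$, hence decreases $-\int\phi_n(0)m_0$, while $\alpha_n$ is unchanged), and that it makes $\phi_n\geq\psi\geq -C(\|\phi_T\|_{C^2},\|A_{ij}\|_{C^0},\|H(\cdot,D\phi_T)\|_\infty)$. This simultaneously furnishes the claimed lower bound on the minimizer.

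With $\phi_n$ now bounded below by a fixed constant, Theorem \ref{thm:estiHJ} applies to $\phi_n$ (which is bounded below, so $\phi_{n-}$ is controlled): we obtain uniform bounds on $\|\phi_n\|_{L^\infty((0,T);L^\eta(\T^d))}$ and $\|\phi_n\|_{L^\gamma((0,T)\times\T^d)}$ in terms of $\|\alpha_n\|_p$, $\|\phi_T\|_{L^\eta}$ and the lower bound. In particular $\int_{\T^d}\phi_n(0^+)m_0\,dx$ is bounded (using the trace continuity of Lemma \ref{lem:intem0phiBV} in $L^{\eta'}$, paired with $m_0\in L^\infty\subset L^\eta$), which then closes the loop and yields a genuine uniform bound on $\|\alpha_n\|_p$ by \eqref{HypGrowthFstar}. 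Next I would derive a bound on $\|D\phi_n\|_r$: from the distributional inequality tested against $1$ (or, more carefully, against a smooth positive function), the coercivity $H(x,\xi)\geq \frac{1}{rC_2}|\xi|^r - C_2$ in \eqref{HypGrowthH} combined with the bounds on $\phi_n$, $\alpha_n$ and $\phi_T$ gives $\iint |D\phi_n|^r\leq C$. Thus $\phi_n$ is bounded in $L^\gamma((0,T)\times\T^d)$ with $D\phi_n$ bounded in $L^r$, and $\alpha_n$ bounded in $L^p$.

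Then I would extract weakly convergent subsequences: $\alpha_n\rightharpoonup\alpha$ in $L^p$, $\phi_n\rightharpoonup\phi$ in $L^\gamma$ and $D\phi_n\rightharpoonup D\phi$ in $L^r$. The constraint set ${\mathcal K}$ is closed under this convergence for every term except the nonlinear one $H(x,D\phi_n)$: passing to the limit there requires only lower semicontinuity, and since the distributional inequality \eqref{ineq:phi} reads, against a fixed nonnegative test function $\zeta$, as $\int\int \zeta H(x,D\phi_n)\leq (\text{linear terms in }\phi_n,\alpha_n) + (\text{boundary})$, and $\xi\mapsto\int\int\zeta H(x,\xi)$ is convex hence weakly lower semicontinuous in $L^r$, the limit $(\phi,\alpha)$ still satisfies \eqref{ineq:phi}; the terminal condition $\phi(T,\cdot)\leq\phi_T$ passes to the limit via the trace of Lemma \ref{lem:intem0phiBV}. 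Hence $(\phi,\alpha)\in{\mathcal K}$, and $\phi\geq\psi$ gives the lower bound. Finally, $\mathcal A$ is sequentially weakly lower semicontinuous: $\alpha\mapsto\int\int F^*(x,\alpha)$ is convex and strongly lower semicontinuous, hence weakly lsc, while the linear term $-\int\phi_n(0^+)m_0$ converges (again by the trace continuity of Lemma \ref{lem:intem0phiBV} combined with $m_0\in L^{\eta}$ and the weak convergence of the traces, which one must verify passes to the limit). Therefore $\mathcal A(\phi,\alpha)\leq\liminf \mathcal A(\phi_n,\alpha_n) = \inf_{\mathcal K}\mathcal A$, so $(\phi,\alpha)$ is a minimizer. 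The main obstacle is the interplay between the a priori lack of a lower bound on $\phi_n$ and the need for that bound to make Theorem \ref{thm:estiHJ} applicable and to control $\int\phi_n(0)m_0$; the truncation-from-below against the explicit subsolution $\psi$ is the device that resolves it, and one must carefully check that truncation preserves membership in ${\mathcal K}$ with the same $\alpha_n$ and does not increase $\mathcal A$.
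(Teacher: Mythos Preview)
Your overall strategy --- direct method on the relaxed problem, truncation from below against an explicit smooth subsolution $\psi$ --- is sound, but the argument as written has a genuine circularity in the bound on $\|\alpha_n\|_p$. You propose to apply Theorem~\ref{thm:estiHJ} to control $\|\phi_n\|_{L^\infty_tL^\eta_x}$, hence $\int\phi_n(0^+)m_0$, \emph{in terms of} $\|\alpha_n\|_p$, and then ``close the loop'' via the $\mathcal A$-bound $\frac1C\|\alpha_n\|_p^p \leq C+\int\phi_n(0)m_0$. But Theorem~\ref{thm:estiHJ} gives no explicit (let alone sub-$p$-th-power) dependence on $\|\alpha_n\|_p$, so the loop does not close as stated. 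The fix is to reorder: test the HJ inequality against $m_0$ \emph{first} (the step you relegate to the $\|D\phi_n\|_r$ bound), which yields the \emph{linear} estimate $\int\phi_n(0)m_0 + \frac1C\|D\phi_n\|_r^r \leq C\|\alpha_n\|_p + C$; adding this to the $\mathcal A$-bound gives $\|\alpha_n\|_p^p + \|D\phi_n\|_r^r \leq C\|\alpha_n\|_p + C$, and only \emph{then} invoke Theorem~\ref{thm:estiHJ} for the $L^\gamma$ compactness. This is exactly the paper's Step~1.

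The paper also takes a different route to the lower bound that avoids your second soft spot. Your claim that $\max\{\phi_n,\psi\}\in\mathcal K$ with the same $\alpha_n$ is correct, but it is not a consequence of the Remark after Theorem~\ref{int-reg} (which only treats $\phi_+$); since $\phi_n$ is merely a distributional subsolution, one needs a mollification-plus-commutator argument followed by Ishii's viscosity/distributional equivalence, essentially the computation carried out in the uniqueness proof of Section~\ref{subsec:uniq}. The paper sidesteps this by starting instead with a minimizing sequence $(\tilde\phi_n)$ for the \emph{smooth} problem~\eqref{PB:dual2} (legitimate by Proposition~\ref{prop:valeursegales}), defining $\alpha_n$ from it, and replacing $\tilde\phi_n$ by the continuous \emph{viscosity solution} $\phi_n$ of $-\partial_t\phi_n-A_{ij}\partial_{ij}\phi_n+H(x,D\phi_n)=\alpha_n$, $\phi_n(T)=\phi_T$; standard comparison then gives $\phi_n\geq\tilde\phi_n\vee\psi$, and~\cite{Ish95} converts this to a distributional subsolution. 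Finally, your treatment of the trace term $-\int\phi_n(0)m_0$ in the lower-semicontinuity step is too quick: weak $L^\gamma$-convergence of $\phi_n$ does not give convergence of traces. The paper only proves the one-sided bound $\limsup_n\int\phi_n(0)m_0\leq\int\bar\phi(0)m_0$, via the time-monotonicity $\int\phi_n(0)m_0\leq\int\phi_n(t)m_0+Ct^{1/p'}$ and weak-$*$ convergence of $t\mapsto\int\phi_n(t)m_0$ in $L^\infty(0,T)$; this is enough for $\liminf\mathcal A(\phi_n,\alpha_n)\geq\mathcal A(\bar\phi,\bar\alpha)$.
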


\begin{proof}  We start with the construction of a suitable minimizing sequence. Let $(\tilde \phi_n)$ be a minimizing sequence for problem \eqref{PB:dual2} and let us set 
\be\label{defalphan}
\alpha_n(t,x)= \max\{0\ ; \ -\partial_t\tilde \phi_n(t,x) -A_{ij}\partial_{ij}\tilde \phi_n(t,x)+H(x,D\tilde \phi_n(t,x))\}.
\ee 
By Proposition \ref{prop:valeursegales} and the fact that $F^*(x,\alpha)=0$ if $\alpha\leq0$, the pair $(\phi_n, \alpha_n)$ is also a minimizing sequence of \eqref{PB:dual-relaxed}. Let $\psi$ be the unique viscosity solution to 
$$
-\partial_t \psi-A_{ij}(x)\partial_{ij}\psi+H(x,D\psi)= 0, \qquad \psi(T,\cdot)= \phi_T.
$$
As $\phi_T$ is $C^2$, $\psi(t,x)\geq \tilde \phi_T(x)-C(T-t)$, where the constant $C$ depends on $\|\phi_T\|_{C^2}$, on $\|A_{ij}\|_{C^0}$ and on $\|H(\cdot, D\phi_T)\|_\infty$. Let $\phi_n$ be the (continuous) viscosity solution to 
\be\label{eqphin}
-\partial_t \phi_n-A_{ij}(x)\partial_{ij}\phi_n+H(x,D\phi_n)\leq \alpha_n, \qquad \psi(T,\cdot)\leq \phi_T.
\ee
By comparison, $\phi_n\geq \tilde \phi_n\vee \psi$.  As $H$ is convex, \eqref{eqphin} holds in the sense of distributions (see  \cite{Ish95}). Therefore the sequence  $(\phi_n,\alpha_n)$ is still minimizing, with the following  bound below for $(\phi_n)$: 
\be\label{unifboundbelow}
\phi_n(t, x)\geq \phi_T(x)-C(T-t).
\ee

\noindent {\bf Step 1:} We claim that $(\alpha_n)$ is bounded in $L^p((0,T)\times \T^d)$. For this, we integrate \eqref{eqphin} against $m_0$ on  $(0,T)\times \T^d$
$$
\int_{\T^d} \phi_n(0)m_0 +\int_0^T\int_{\T^d} \partial_i m_0A_{ij}\partial_j \phi_n + (\partial_j A_{ij}) m_0\partial_j \phi_n +m_0H(x,D\phi_n) \leq \int_0^T \inte m_0\alpha_n+\inte \phi_Tm_0.
$$
As $(1/C_0)\leq m_0\leq C_0$ for some $C_0>0$, $\|Dm_0\|_\infty<+\infty$ and $H$ is coercive, we get
\be\label{phi(0)Dphi}
\int_{\T^d} \phi_n(0)m_0 +\frac{1}{C} \int_0^T\inte |D\phi_n|^r  \leq C_0\|\alpha_n\|_p+C.
\ee
%Recalling that $(\phi_n)$ is bounded below, 
On the other hand, as $(\phi_n)$ is a minimizing sequence and $F^*$ is coercive, 
$$
\frac{1}{C} \|\alpha_n\|_p^p -\inte \phi_n(0)m_0 \leq \int_0^T \inte F^*(x,\alpha_n)-\inte \phi_n(0)m_0+C \leq C. 
$$
Adding the previous inequalities, we get 
$$
\frac{1}{C} \|\alpha_n\|_p^p+\frac{1}{C} \int_0^T\inte |D\phi_n|^r  \leq C_0\|\alpha_n\|_p+C,
$$
so that $(\alpha_n)$ is bounded in $L^p((0,T)\times \T^d)$ while $(D\phi_n)$ is bounded  in $L^r$.\\

\noindent{\bf Step 2:} We show here that $(\phi_n,\alpha_n)$ has a limit. As $(\alpha_n)$ is bounded in $L^p$ and $(\phi_n)$ is uniformly bounded below thanks to \eqref{unifboundbelow}, Theorem \ref{thm:estiHJ} implies that $(\phi_n)$ is bounded in $L^\gamma$. So we can assume with loss of generality that $\alpha_n \rightharpoonup \bar\alpha$ in $L^p$, $\phi_n \rightharpoonup \bar \phi$ in $L^\gamma$ and $D\phi_n \rightharpoonup D\bar\phi$ in $L^r$ where, in view of the convexity of $H$, the pair $(\bar\phi,\bar\alpha)$ belongs to  ${\mathcal K}$. \\

\noindent{\bf Step 3:} We now prove that $(\bar\phi,\bar\alpha)$ is a minimizer. By weak lower semicontinuity arguments, we have
$$
\liminf_n \int_0^T\inte F^*(x,\alpha_n)\geq \int_0^T\inte F^*(x,\bar\alpha).
$$
Let $\ds \zeta_n(t)= \inte m_0\phi_n(t)$ and $\ds \bar \zeta(t)= \inte m_0\bar \phi(t)$. Then $(\zeta_n)$ converges  weak* to $\bar\zeta$ in $L^\infty$ thanks to Theorem \ref{thm:estiHJ}. As 
$$
- \frac{d}{dt} \zeta_n(t)+ \int_{\T^d} \lg Dm_0,AD\phi_n(t)\rg +m_0 (\partial_iA_{ij}\partial_j\phi_n+ H(x,D\phi_n) -\alpha_n ) \leq 0, 
$$
we also have by coercivity of $H$ and thanks to the bound on $(\alpha_n)$:
$$
\zeta_n(0) -Ct^{\frac1{p'}} \leq \zeta_n(t) \qquad \forall t\in [0,T].
$$
Letting $n\to+\infty$: 
$$
\limsup_n \zeta_n(0) -Ct^{\frac1{p'}} \leq\bar  \zeta(t) \qquad a.e. \ t\in [0,T], 
$$
so that $\ds \limsup_n \zeta_n(0) \leq \inte m_0\bar\phi(0)$. Hence 
$$
\liminf_n \int_0^T\inte F^*(x,\alpha_n)-\inte m_0\phi_n(0) \geq \int_0^T\inte F^*(x,\bar \alpha)-\inte m_0\bar\phi(0)
$$
and $(\bar\phi,\bar\alpha)$ is a minimum. 
\end{proof}

\begin{Remark}\label{rem:CS}{\rm If $r>2$ and $p>1+d/r$, then by \cite{CS} the sequence $(\phi_n)$ built at the beginning of the proof is uniformly H\"{o}lder continuous. Hence so is $\phi$.
}\end{Remark}

%%%%%%%%%%%%%%%%%%%%%%%%%%%%%%%%%%%%%%%%%%%%%%%%
\section{Existence and uniqueness of a solution for the MFG system}\label{sec:exuniq}

In this section we show that the MFG system \eqref{MFG} has a unique weak solution and prove the stability of this solution with respect to the data. 

\subsection{Definition of weak solutions}

The variational method described above provides weak solutions for the MFG system. By a weak solution, we mean the following: 

\begin{Definition}\label{def:weaksolMFG} We say that a pair $(\phi,m)\in L^\gamma((0,T)\times\T^d) \times L^q((0,T)\times\T^d)$ is a weak solution to \eqref{MFG} if 
\begin{itemize}
\item[(i)] the following integrability conditions hold:
$$\ds D\phi\in L^r, \; \ds mH^*(\cdot, D_pH(\cdot,D\phi))\in L^1 
\quad{\rm and }\quad m D_pH(\cdot,D\phi))\in L^1 .
$$ 

\item[(ii)] Equation \eqref{MFG}-(i) holds in the following sense: inequality
\be\label{eq:distrib}
\ds \quad -\partial_t \phi-\partial_i (A_{ij}(x)\partial_{j}\phi) + (\partial_i A_{ij})\partial_j \phi+H(x,D\phi)\leq  f(x,m) \quad {\rm in }\; (0,T)\times \T^d, 
\ee
with $\phi(T,\cdot)\leq \phi_T$, holds in the sense of distributions,  

\item[(iii)] Equation \eqref{MFG}-(ii) holds:  
\be\label{eqcontdef}
\ds \quad \partial_t m-\partial_{ij} (A_{ij}(x)m)-{\rm div}(m D_pH(x,D\phi)))= 0\  {\rm in }\; (0,T)\times \T^d, \quad m(0)=m_0
\ee
in the sense of distributions,

\item[(iv)] The following equality holds: 
\be\label{defcondsup} \begin{array}{l}
\ds \int_0^T\inte m(t,x)\left(f(x,m(t,x))+ H^*(x, D_pH(x,D\phi)(t,x)) \right)dxdt\\
\ds\qquad \qquad \qquad \qquad \qquad \qquad \qquad + \inte m(T,x)\phi_T(x)-m_0(x)\phi(0,x)dx=0. 
\end{array}\ee 
\end{itemize}
\end{Definition}

Our main result is the following existence and uniqueness theorem:

\begin{Theorem}\label{theo:mainex} There exists a weak solution $( \phi, m)$ to the MFG system \eqref{MFG}. Moreover this solution is unique in the following sense: if $( \phi, m)$ and $(\phi', m')$ are two solutions, then $m= m'$ a.e. and $ \phi=\phi'$ in $\{ m>0\}$. 

Finally, there exists a solution which is bounded below by a constant depending on $\|\phi_T\|_{C^2}$, on $\|A_{ij}\|_{C^0}$ and on $\|H(\cdot, D\phi_T)\|_\infty$.
\end{Theorem}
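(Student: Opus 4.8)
The plan is to obtain the weak solution as the optimality condition linking the two convex problems of Section~\ref{sec:2opti}. Concretely, I would take a minimizer $(\bar\phi,\bar\alpha)\in\mathcal K$ of the relaxed problem \eqref{PB:dual-relaxed}, whose existence and lower bound are guaranteed by Proposition~\ref{Prop:existence}, and take the (unique) minimizer $(m,w)\in\mathcal K_1$ of \eqref{Pb:mw2}, whose existence and integrability $m\in L^q$, $w\in L^{r'q/(r'+q-1)}$ are given by Lemma~\ref{Lem:dualite}. The pair $(\bar\phi,m)$ will be the desired weak solution, with the feedback identity $w=-mD_pH(\cdot,D\bar\phi)$ recovered a.e.\ from the equality case in Lemma~\ref{lem:phialphamw}.

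First I would establish the chain of equalities. By Proposition~\ref{prop:valeursegales} the relaxed and original infima for the Hamilton--Jacobi problem coincide, and by Lemma~\ref{Lem:dualite} this common value equals $-\min_{\mathcal K_1}\mathcal B$. Hence $\mathcal A(\bar\phi,\bar\alpha)+\mathcal B(m,w)=0$. Writing this out and using $\int F^*(x,\bar\alpha)\ge \int(\bar\alpha m - F(x,m))$ (Fenchel's inequality) together with the inequality of Lemma~\ref{lem:phialphamw}, every inequality in the chain must in fact be an equality. From the Fenchel equality $F^*(x,\bar\alpha)+F(x,m)=\bar\alpha m$ a.e., and the fact that $F(x,\cdot)$ is differentiable and strictly convex, I deduce $\bar\alpha=f(x,m)$ a.e.\ on $\{m>0\}$; combined with $F^*(x,\bar\alpha)=0 \Leftrightarrow \bar\alpha\le 0$ and $f(x,0)=0$, after replacing $\bar\alpha$ by $\bar\alpha\vee 0$ (allowed by \eqref{eq:positif}) one can arrange $\bar\alpha = f(x,m)$ a.e.\ on all of $(0,T)\times\T^d$. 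This turns \eqref{ineq:phi} into \eqref{eq:distrib}, giving (ii) of Definition~\ref{def:weaksolMFG}. The equality in Lemma~\ref{lem:phialphamw} at $t=0$ then yields $w=-mD_pH(x,D\bar\phi)$ a.e., which upgrades \eqref{conteq12} to \eqref{eqcontdef}, giving (iii), and makes \eqref{defcondsup} precisely the equality $\mathcal A+\mathcal B=0$ rewritten, giving (iv). The integrability conditions in (i) follow because $mH^*(\cdot,-w/m)\in L^1$ from $\mathcal B(m,w)<\infty$ and $mD_pH(\cdot,D\bar\phi)=-w\in L^{r'q/(r'+q-1)}\subset L^1$.

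For uniqueness, I would follow the standard Lasry--Lions monotonicity argument adapted to the weak setting. Given two solutions $(\phi,m)$ and $(\phi',m')$, I would use $m'$ as a test "density" against the HJ inequality for $\phi$ and $m$ against that for $\phi'$ (justified by the integration-by-parts formula following Lemma~\ref{lem:intem0phiBV}, since $m,m'\in L^q$ and $\phi,\phi'\in L^\gamma$ with $\gamma\ge p$), then add, using equation~\eqref{eqcontdef} for each. Exploiting the convexity inequality $H^*(x,D_pH(x,D\phi'))\ge -\langle D_pH(x,D\phi'),D\phi\rangle + \dots$ and subtracting the equalities \eqref{defcondsup}, the cross terms combine into $-\int\int (m-m')(f(x,m)-f(x,m'))\,dxdt$ plus two nonnegative "Bregman" remainders measuring the gap in Jensen/convexity for $H$ at $D\phi$ vs $D\phi'$. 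Since $f(x,\cdot)$ is strictly increasing, monotonicity forces $m=m'$ a.e., and the vanishing of the Bregman terms on $\{m>0\}$ (together with strict convexity of $H^*$, or rather the structure $w=-mD_pH$) forces $D\phi=D\phi'$ a.e.\ on $\{m>0\}$; since $m>0$ has full measure near $t=0$ by the Kolmogorov structure and $m_0>0$, one propagates $\phi=\phi'$ on $\{m>0\}$. The final lower-bound assertion is immediate from Proposition~\ref{Prop:existence}, since the solution $\phi=\bar\phi$ produced above inherits that bound.

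The main obstacle is the rigorous justification of the integration-by-parts and the equality-case analysis at the level of regularity available: $\phi$ is only in $L^\gamma$ with $D\phi\in L^r$ and has merely a BV-in-time trace in the weak sense of Lemma~\ref{lem:intem0phiBV}, so pairing it against $m$ requires the mollification-and-commutator machinery already deployed in Lemma~\ref{lem:phialphamw} (relying on \eqref{CondCroiss}, i.e.\ $r\ge p$, to kill the commutator error $\int\partial_j\phi R_\epsilon$). Care is needed that all boundary terms at $t=0$ and $t=T$ make sense via the weak traces and Lemma~\ref{lem:m-cts}, and that the strict-convexity arguments identifying $\bar\alpha$ with $f(x,m)$ and $D\phi$ with $D\phi'$ are applied only where $m>0$, matching exactly the sense of uniqueness claimed in the statement.
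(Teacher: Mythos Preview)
Your existence argument is essentially the paper's: it packages the content into a separate statement (Theorem~\ref{theo:main}) asserting that minimizers of \eqref{Pb:mw2} and \eqref{PB:dual-relaxed} yield a weak solution and conversely, and the proof of that statement is exactly the chain of equalities you describe. The lower bound on $\bar\phi$ is indeed inherited from Proposition~\ref{Prop:existence}.

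Your uniqueness argument, however, diverges from the paper and has a genuine gap. The uniqueness of $m$ is fine: either your monotonicity computation, or more directly the observation that any weak solution produces a minimizer of the strictly convex problem \eqref{Pb:mw2}, forces $m=m'$. The problem is the step from there to $\phi=\phi'$ on $\{m>0\}$. Your route gives at best $D_pH(x,D\phi)=D_pH(x,D\phi')$ a.e.\ on $\{m>0\}$ (this is what the vanishing of the Bregman remainder, or equivalently the equality case of Lemma~\ref{lem:phialphamw}, yields). To upgrade this to $D\phi=D\phi'$ you would need strict convexity of $H$, which is \emph{not} assumed in (H2); and to then pass from $D\phi=D\phi'$ on $\{m>0\}$ to $\phi=\phi'$ on $\{m>0\}$ you invoke that ``$m>0$ has full measure near $t=0$'', but no such positivity is available here: the diffusion is degenerate (possibly $A\equiv0$), there is no strong maximum principle, and the density $m\in L^q$ may well vanish on sets of positive measure for all $t>0$.

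The paper avoids both obstacles by a different device: once $m=m'=:\bar m$ is known, it sets $\bar\phi:=\phi_1\vee\phi_2$ and shows (via mollification, the commutator lemma of DiPerna--Lions, and the viscosity/distributional equivalence of \cite{Ish95}) that $\bar\phi$ is still a subsolution of \eqref{ineq:phi} with the same right-hand side $\bar\alpha=f(\cdot,\bar m)$. Since $\int m_0\bar\phi(0)\ge\int m_0\phi_1(0)$, the pair $(\bar\phi,\bar\alpha)$ is also a minimizer of \eqref{PB:dual-relaxed}; comparing the time-$t$ truncated problems then gives $\int \bar m(t)\bar\phi(t)=\int \bar m(t)\phi_1(t)$ for a.e.\ $t$, and together with $\phi_1\le\bar\phi$ this forces $\phi_1=\bar\phi$ a.e.\ on $\{\bar m>0\}$. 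The same holds for $\phi_2$, hence $\phi_1=\phi_2$ on $\{\bar m>0\}$. This argument uses only convexity of $H$ and nothing about the structure of the set $\{\bar m>0\}$.
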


\begin{Remark}{\rm Under the assumptions of Remark \ref{rem:CS}, i.e., if $r>2$ and $p>1+d/r$, the $\phi$-component of the solution is  locally H\"{o}lder continuous. 
}\end{Remark}

%%%%%%%%%%%%%%%%%%%%%%%%%%%%%%%%%%%%%%%%
\subsection{Existence of a weak solution}

The first step towards the proof of Theorem \ref{theo:mainex} consists in showing a one-to-one equivalence between solutions of the MFG system and the two optimizations problems \eqref{Pb:mw2} and \eqref{PB:dual-relaxed}.

\begin{Theorem}\label{theo:main}
Let $(\bar m,\bar w)\in \mathcal K_1$ be a minimizer of \eqref{Pb:mw2} and $(\bar \phi,\bar \alpha)\in \mathcal K$ be a minimizer of \eqref{PB:dual-relaxed}. Then $(\bar \phi, \bar m)$ is a weak solution of the mean field games system \eqref{MFG} and $\bar w= -\bar mD_pH(\cdot,D\bar \phi)$ while $\bar\alpha= f(\cdot,\bar m)$ a.e.. 

Conversely, any weak solution $(\bar \phi,\bar m)$ of \eqref{MFG}  is such that the pair $(\bar m,-\bar mD_pH(\cdot,D\bar\phi))$ is the minimizer of \eqref{Pb:mw2} while $(\bar \phi, f(\cdot,\bar m))$ is a minimizer of \eqref{PB:dual-relaxed}. 
\end{Theorem}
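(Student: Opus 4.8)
The plan is to exploit the duality equality $\inf_{\phi\in\mathcal K_0}\mathcal A(\phi) = -\min_{(m,w)\in\mathcal K_1}\mathcal B(m,w)$ from Lemma \ref{Lem:dualite}, together with the relaxation identity $\inf_{\phi\in\mathcal K_0}\mathcal A(\phi)=\inf_{(\phi,\alpha)\in\mathcal K}\mathcal A(\phi,\alpha)$ from Proposition \ref{prop:valeursegales}, so that for a minimizer $(\bar\phi,\bar\alpha)$ of the relaxed problem and a minimizer $(\bar m,\bar w)$ of \eqref{Pb:mw2} one has $\mathcal A(\bar\phi,\bar\alpha) = -\mathcal B(\bar m,\bar w)$, i.e. $\mathcal A(\bar\phi,\bar\alpha)+\mathcal B(\bar m,\bar w)=0$. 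First I would write this sum out explicitly: it equals
\[
\int_0^T\!\!\int_{\T^d}\! F^*(x,\bar\alpha)+F(x,\bar m)-\bar\alpha\,\bar m\;dxdt
+\int_0^T\!\!\int_{\T^d}\!\bar m\Big(H^*(x,-\tfrac{\bar w}{\bar m})+H(x,D\bar\phi)\Big)+\langle \bar w,D\bar\phi\rangle\;dxdt
+ R,
\]
where $R=\int_{\T^d}\bar m(T)\phi_T - \bar m_0\bar\phi(0) + \int_0^T\!\int \langle\bar w,D\bar\phi\rangle - (\text{the }\langle \bar w,D\bar\phi\rangle\text{ already written})$; more cleanly, I would group the terms so that after using Lemma \ref{lem:phialphamw} (with $t=0$) the boundary/transport contribution $[\int_{\T^d}\bar m\bar\phi]_0^T+\int_0^T\!\int \bar m(\bar\alpha+H^*(x,-\bar w/\bar m))$ is $\geq 0$, while the two pointwise Fenchel defects
\[
F^*(x,\bar\alpha)+F(x,\bar m)-\bar\alpha\bar m\;\geq\;0,\qquad
\bar m\,H^*(x,-\tfrac{\bar w}{\bar m})+\bar m\,H(x,D\bar\phi)+\langle\bar w,D\bar\phi\rangle\;\geq\;0
\]
are nonnegative by the Fenchel--Young inequality. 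Since the total is zero, each of these three nonnegative quantities vanishes a.e.

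From the vanishing of the first Fenchel defect and strict convexity of $F(x,\cdot)$ (equivalently strict monotonicity of $f$), equality in Fenchel--Young forces $\bar\alpha = \partial_m F(x,\bar m) = f(x,\bar m)$ a.e.; in particular $\bar\alpha\in L^p$ is consistent with $\bar m\in L^q$ and $f$'s growth. From the vanishing of the second defect, equality in Young's inequality for $H/H^*$ gives $-\bar w/\bar m = D_pH(x,D\bar\phi)$ on $\{\bar m>0\}$, i.e. $\bar w=-\bar m D_pH(x,D\bar\phi)$ there; combined with $\bar w=0$ on $\{\bar m=0\}$ (optimality, as recorded in Lemma \ref{Lem:dualite}) this gives $\bar w=-\bar m D_pH(x,D\bar\phi)$ a.e. Alternatively, this last identity is exactly the conclusion of the final assertion of Lemma \ref{lem:phialphamw}, which applies because we have just shown equality holds in \eqref{ineq:ineqfirst} at $t=0$. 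Substituting these identities, \eqref{eqcontdef} becomes \eqref{conteq12}, the distributional inequality for $(\bar\phi,\bar\alpha)$ in $\mathcal K$ becomes \eqref{eq:distrib}, the integrability conditions in Definition \ref{def:weaksolMFG}(i) follow from $\bar m\in L^q$, $D\bar\phi\in L^r$ and the integrability built into $\mathcal B(\bar m,\bar w)<\infty$, and the vanishing boundary/transport term is precisely the equality \eqref{defcondsup}. Hence $(\bar\phi,\bar m)$ is a weak solution in the sense of Definition \ref{def:weaksolMFG}.

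For the converse, given a weak solution $(\bar\phi,\bar m)$, set $\bar w:=-\bar m D_pH(x,D\bar\phi)$ and $\bar\alpha:=f(x,\bar m)$; conditions (i)--(iii) show $(\bar m,\bar w)\in\mathcal K_1$ with $\bar m H^*(\cdot,-\bar w/\bar m)\in L^1$, $\bar m\in L^q$, and $(\bar\phi,\bar\alpha)\in\mathcal K$ (using the estimates of Theorem \ref{thm:estiHJ} to get $\bar\phi\in L^\gamma$ once $\bar\phi$ is known to be bounded below — or, for a general solution, applying the estimate to $\bar\phi_+$). Reversing the computation above, the two Fenchel defects vanish identically by the pointwise equalities $\bar\alpha=f(x,\bar m)$ and $\bar w/\bar m=-D_pH(x,D\bar\phi)$, and the boundary/transport term vanishes by \eqref{defcondsup}; therefore $\mathcal A(\bar\phi,\bar\alpha)+\mathcal B(\bar m,\bar w)=0$. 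Since Lemma \ref{lem:phialphamw} gives $\mathcal A(\phi,\alpha)\geq -\mathcal B(m,w)$ for all admissible competitors (this is the content of the proof of Proposition \ref{prop:valeursegales}), the pair $(\bar m,\bar w)$ minimizes \eqref{Pb:mw2} and $(\bar\phi,\bar\alpha)$ minimizes \eqref{PB:dual-relaxed}.

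The main obstacle I anticipate is justifying the integration by parts that turns "$\mathcal A+\mathcal B=0$" into the sum of three manifestly nonnegative terms: $\bar\phi$ is only in $L^\gamma$ with $D\bar\phi\in L^r$ and no BV control, so one cannot naively pair $\bar\phi$ against $\partial_t\bar m$. This is exactly why Lemma \ref{lem:phialphamw} was proved via mollification with the commutator estimates of \cite{DL89} and the hypothesis $r\geq p$ (so that $\iint \partial_j\bar\phi\,R_\epsilon\to 0$); I would lean entirely on that lemma, in particular on its trace-pairing $[\int_{\T^d}\bar m\bar\phi]_0^T$ and its final rigidity statement, rather than re-deriving the integration by parts. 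The other point requiring a little care is the a.e.\ pointwise application of Fenchel--Young on $\{\bar m=0\}$, which is handled by the convention $\bar m H^*(x,-\bar w/\bar m)=0$ when $\bar w=0$ together with $F(x,0)=0=F^*(x,f(x,0))$ from the normalization \eqref{Hypf(0,m)=0}.
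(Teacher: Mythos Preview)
Your proposal is correct and, for the forward direction, essentially identical to the paper's argument: write $\mathcal A(\bar\phi,\bar\alpha)+\mathcal B(\bar m,\bar w)=0$ from Lemma \ref{Lem:dualite} and Proposition \ref{prop:valeursegales}, peel off the nonnegative Fenchel defect $F^*(x,\bar\alpha)+F(x,\bar m)-\bar\alpha\bar m\geq 0$, and observe that the remainder $[\int_{\T^d}\bar m\bar\phi]_0^T+\iint \bar m(\bar\alpha+H^*(x,-\bar w/\bar m))$ is then $\leq 0$ while Lemma \ref{lem:phialphamw} forces it $\geq 0$; equality in both pieces gives $\bar\alpha=f(x,\bar m)$ and (via the rigidity clause of Lemma \ref{lem:phialphamw}) $\bar w=-\bar m D_pH(x,D\bar\phi)$. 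Your instinct to \emph{not} try to isolate a separate ``$H$-Fenchel defect'' term by integrating by parts, and instead to let Lemma \ref{lem:phialphamw} do all the work, is exactly what the paper does and is the right call given the low regularity of $\bar\phi$.

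For the converse, your route differs slightly from the paper's. You argue by \emph{zero duality gap}: once $\bar w$ and $\bar\alpha$ are defined, the pointwise Fenchel equalities and \eqref{defcondsup} yield $\mathcal A(\bar\phi,\bar\alpha)+\mathcal B(\bar m,\bar w)=0$, and the weak duality $\mathcal A(\phi,\alpha)\geq -\mathcal B(m,w)$ (established inside the proof of Proposition \ref{prop:valeursegales} via Lemma \ref{lem:phialphamw}) then forces both pairs to be minimizers. The paper instead compares $(\bar\phi,\bar\alpha)$ directly against an arbitrary competitor $(\phi',\alpha')$ using convexity of $F^*$ and Lemma \ref{lem:phialphamw}, and likewise for $(\bar m,\bar w)$. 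Both arguments are standard and equivalent in strength; yours is slightly more concise, while the paper's makes the role of each convexity assumption more explicit. One small remark: in the converse you need not invoke Theorem \ref{thm:estiHJ} to place $\bar\phi$ in $L^\gamma$, since $\bar\phi\in L^\gamma$ is already part of Definition \ref{def:weaksolMFG}.
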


\begin{proof}
Let $(\bar m,\bar w) \in \s{K}_1$ be a minimizer of Problem \eqref{Pb:mw2} and $(\bar \phi,\bar \alpha) \in \s{K}$ be a minimizer of Problem   \eqref{PB:dual-relaxed}.
Due to Lemma \ref{Lem:dualite} and Proposition \ref{prop:valeursegales}, we have
\begin{equation*}
\int_0^T \int_{\bb{T}^d} F^*(x,\bar \alpha) + F(x,\bar m) + \bar mH^*\left(x,-\frac{\bar w}{\bar m}\right) dxdt + \int_{\bb{T}^d} \phi_T \bar m(T) - \bar \phi(0)m_0 dx = 0.
\end{equation*}
We show that $\bar \alpha = f(x,\bar m)$. Indeed, by convexity of $F$,
\begin{equation} \label{eq:F_Fstar_geq}
F^*(x,\bar \alpha(t,x)) + F(x,\bar m(t,x)) - \bar \alpha(t,x)\bar m(t,x) \geq 0, 
\end{equation}
hence
$$
\int_0^T \int_{\bb{T}^d}  \bar \alpha(t,x)\bar m(t,x)+ \bar mH^*\left(x,-\frac{\bar w}{\bar m}\right) dxdt + \int_{\bb{T}^d} \phi_T \bar m(T) -\bar  \phi(0)m_0 dx \leq 0.
$$
Thanks to Lemma \ref{lem:phialphamw}, the above inequality is in fact  an equality,  $\bar w=-\bar mD_pH(\cdot,D\bar \phi)$ a.e. and the equality holds almost everywhere in Equation (\ref{eq:F_Fstar_geq}). Therefore, 
\begin{equation} \label{eq:alpha_equals_f}
\bar \alpha(t,x) = f(x,\bar m(t,x))
\end{equation}
almost everywhere and (\ref{defcondsup}) holds:
\begin{equation*} \label{eq:ibp_equality}
\int_0^T \int_{\bb{T}^d} f \bar m + \bar mH^*\left(x,-\frac{\bar w}{\bar m}\right) dxdt + \int_{\bb{T}^d} \phi_T \bar m(T) - \bar \phi(0)m_0 dx = 0.
\end{equation*}
In particular $\bar mH^*(\cdot, D_pH(\cdot,D\bar \phi))\in L^1$.
 
Moreover, since $(\bar \phi,\bar \alpha) \in \s{K}$ and Equation (\ref{eq:alpha_equals_f}) holds, we have $-\partial_t\bar  \phi -A_{ij}\partial_{ij}\bar \phi+ H(x,D\bar \phi) \leq f(x,\bar m)$ in the sense of distributions and $\bar \phi(T)\leq \phi_T$.

Furthermore, since $(\bar \phi,\bar \alpha) \in \s{K}$  and $\bar w=-\bar mD_pH(\cdot,D\bar \phi)$, we have that $\bar mD_pH(\cdot,D\bar \phi)\in L^1$ and (\ref{eqcontdef}) holds in the sense of distributions.

Therefore $(\bar \phi, \bar m)$ is a solution in the sense of Definition \ref{def:weaksolMFG}.

Suppose now that $(\bar \phi,\bar m)$ is a weak solution of (\ref{MFG}) as in Definition \ref{def:weaksolMFG}. Set $\bar w = -\bar mD_p H(\cdot,D\bar \phi)$ and $\bar \alpha(t,x) = f(x,\bar m(t,x))$. By definition of weak solution $\bar w,\bar \alpha \in L^1$, $\bar m\in L^q$ and $\bar \phi\in L^\gamma$.  Moreover, since $f$ is increasing in $\bar m$ and $\bar m\in L^q$, the growth condition (\ref{HypGrowthF}) implies that $\bar \alpha \in L^p$.
Therefore $(\bar m,\bar w) \in \s{K}_1$ and $(\bar \phi,\bar \alpha) \in \s{K}$.

It remains to show that $(\bar \phi,\bar \alpha)$ minimizes $\s{A}$ and $(\bar m,\bar w)$ minimizes $\s{B}$.

 Let $(\bar \phi',\bar \alpha') \in \s{K}$. By the convexity of $F$ in the second variable, we have
\begin{align*}
\s{A}(\bar \phi',\bar \alpha') &= \int_0^T \int_{\bb{T}^d} F^*(x,\bar \alpha'(t,x))dx dt - \int_{\bb{T}^d} \bar \phi'(0,x)m_0(x)dx\\
&\geq \int_0^T \int_{\bb{T}^d} F^*(x,\bar \alpha(t,x)) + \partial_\alpha F^*(x,\bar \alpha(t,x))(\bar \alpha'(t,x) - \bar \alpha(t,x)) dx dt - \int_{\bb{T}^d}\bar  \phi'(0,x)m_0(x)dx\\
&\geq \int_0^T \int_{\bb{T}^d} F^*(x,\bar \alpha(t,x)) +\bar  m(t,x)(\bar \alpha'(t,x) - \bar \alpha(t,x)) dx dt - \int_{\bb{T}^d} \bar \phi'(0,x)m_0(x)dx,\\
&\geq \s{A}(\bar \phi,\bar \alpha) + \int_0^T \int_{\bb{T}^d}  \bar m(t,x)(\bar \alpha'(t,x) -\bar  \alpha(t,x)) dx dt + \int_{\bb{T}^d}(\bar \phi(0,x)-\bar  \phi'(0,x))m_0(x)dx. 
\end{align*}
%where $\alpha' m$ and $\alpha m$ both belong to $L^1$ thanks to Lemma \ref{lem:phialphamw}. 
Due to  Equation (\ref{defcondsup}) and Lemma \ref{lem:phialphamw} applied to $(\bar \phi',\bar \alpha')$  and $(\bar m,\bar w)$ we have
\begin{align*}
&\int_0^T \int_{\bb{T}^d}  \bar m(t,x)(\bar \alpha'(t,x) - \bar \alpha(t,x)) dx dt + \int_{\bb{T}^d}(\bar \phi(0,x)- \bar \phi'(0,x))m_0(x)dx=\\ &\int_0^T \int_{\bb{T}^d}  \bar m(t,x)\bar \alpha'(t,x) +\bar m(t,x) H^*(x,-\frac{\bar w(t,x)}{\bar m(t,x)}) dx dt + \int_{\bb{T}^d} \phi_T(x)\bar m(T,x)-\bar \phi'(0,x)m_0(x)dx\geq 0.
\end{align*}
Hence,
$$
\s{A}(\bar \phi',\bar \alpha') \geq \s{A}(\bar \phi,\bar \alpha),
$$
and $(\bar \phi,\bar \alpha)$ is a minimizer of $\s{A}$. 

The argument for $(\bar m,\bar w)$ is similar. Let $(\bar m',\bar w')$ minimize $\s{B}$. Then because $F$ is convex in the second variable, we have
\begin{align*}
\s{B}(\bar m',\bar w') &= \int_{\bb{T}^d} \phi_T \bar m'(T) + \iint \bar m'H^*\left(x,-\frac{\bar w'}{\bar m'}\right) + F(x,\bar m')\\
&\geq \int_{\bb{T}^d} \phi_T \bar m'(T) + \iint \bar m'H^*\left(x,-\frac{\bar w'}{\bar m'}\right) + F(x,\bar m) + f(x,\bar m)(\bar m'-\bar m)\\
&= \int_{\bb{T}^d} \phi_T \bar m'(T) + \iint \bar m'H^*\left(x,-\frac{\bar w'}{\bar m'}\right) + F(x,\bar m) + \bar \alpha(\bar m'-\bar m)\\
&=\s{B}(\bar m,\bar w)+ \int_{\bb{T}^d} \phi_T \bar m'(T)-m_0\bar \phi(0) + \iint \bar m'H^*\left(x,-\frac{\bar w'}{\bar m'}\right)  + \bar \alpha \bar m'\\
&\geq \s{B}(\bar m,\bar w).
\end{align*}
Here  we used Equation (\ref{defcondsup}) in the next to last line, and we applied  Lemma \ref{lem:phialphamw} to $(\bar \phi,\bar \alpha)$  and $(\bar m',\bar w')$ in the last line. Therefore $(\bar m,\bar w)$ is a minimizer of $\s{B}$. 

\end{proof}

%%%%%%%%%%%%%%%%%%%%%%%%%%%%%%%%%%%%%%%%%%%%%%%%
%%%%%%%%%%%%%%%%%%%%%%%%%%%%%%%%%%%%%%%%%%%%%%%%
\subsection{Uniqueness of the weak solution}\label{subsec:uniq}

\begin{proof}[Proof of Theorem \ref{theo:mainex} (uniqueness part)]
%We prove here the uniqueness part of Theorem \ref{theo:mainex}. 

Let $(\bar \phi,\bar m)$ be a weak solution to \eqref{MFG}. In view of Theorem \ref{theo:main},  the pair $(\bar m,-\bar mD_pH(\cdot,D\bar \phi))$ is the minimizer of \eqref{Pb:mw2} while $(\bar \phi, f(\cdot,\bar m))$ is a solution of \eqref{PB:dual-relaxed}. In particular,  $\bar m$ is unique because of the uniqueness of the solution of \eqref{Pb:mw2}.

Let now $( \phi_1,\bar m)$ and $( \phi_2,\bar m)$ be two weak solutions of \eqref{MFG}, and set $\bar \alpha = f(\cdot,\bar m)$.
Let $\bar \phi = \phi_1\vee\phi_2$.
Assume for now that $\bar \phi$ is a subsolution of (\ref{ineq:phi}) in the sense of distributions.
Then $(\bar \phi,\bar \alpha) \in \s{K}$, and so because $-\int \bar \phi(0)m_0 \leq - \int \phi_1(0)m_0$ we have that $(\bar \phi,\bar \alpha)$ is also a solution of \eqref{PB:dual-relaxed}.
Indeed, one deduces from Lemma \ref{lem:phialphamw} that for a.e. $t\in [0,T]$, $(\bar \phi,\bar \alpha)$ and $(\phi_1,\bar \alpha)$, are both minimizers of the problem 
$$
\inf_{(\phi, \alpha)\in {\mathcal K}} \int_t^T\inte F^*(x,\alpha)-\inte m(t)\phi(t).
$$
In particular, $\ds \inte\bar  m(t)\bar \phi(t) = \inte \bar m(t)\phi_1(t)$. As $\phi_1 \leq\bar  \phi$, this implies that $\phi_1 =\bar \phi$ a.e. in $\{\bar m>0\}$. 
The same argument, replacing $\phi_1$ with $\phi_2$, shows that $\phi_2 =\bar \phi$ a.e. in $\{\bar m>0\}$, and uniqueness is proved.

The main work to be shown is that $\bar \phi = \phi_1\vee \phi_2$ is indeed a subsolution of (\ref{ineq:phi}) in the sense of distributions, i.e.
\begin{equation}
\label{eq:max-subsolution}
-\int_{\T^d}\zeta(T)\phi_T + \int_0^T\int_{\T^d}\bar  \phi\partial_t\zeta+ \lg D\zeta, AD\bar \phi\rg + \zeta(\partial_iA_{ij}\partial_j\bar \phi +H(x,D\bar \phi)) \leq \int_0^T\int_{\T^d} \bar \alpha \zeta
\end{equation}
for any nonnegative smooth map $\zeta$ with support in $(0,T]\times \T^d$.

Let $\epsilon > 0$.
Introduce the following translation and extension of $(\phi_k,\bar \alpha), k=1,2$:
\begin{equation}
\tilde{\phi}_k(t,x) = \left\{
\begin{array}{ll}
\phi_k(t+2\epsilon,x) & \text{if}~t \in [-2\epsilon,T-2\epsilon)\\
\phi_T(x) & \text{if}~t\in [T-2\epsilon,T+2\epsilon]
\end{array}\right.
\end{equation}
 and
\begin{equation}
\tilde{\alpha}(t,x) = \left\{
\begin{array}{ll}
\bar \alpha(t+2\epsilon,x) & \text{if}~t \in [-2\epsilon,T-2\epsilon)\\
\lambda & \text{if}~t\in [T-2\epsilon,T+2\epsilon]
\end{array}\right.
\end{equation}
where $\lambda = \max_x H(x,D\phi_T(x)) + A_{ij}(x) \partial_{ij}  \phi_T(x)$.
Then we have that
\begin{equation}
-\partial_t \tilde{\phi}_k - A_{ij}\partial_{ij}\tilde{\phi}_k + H(x,D\tilde{\phi}_k) \leq \tilde{\alpha}
\end{equation}
in the sense of distributions on $(-2\epsilon,T+2\epsilon) \times \bb{T}^d$.

For now we will fix a smooth vector field $\psi$ on $[0,T] \times \bb{T}^d$.
Notice that
\begin{equation}
-\partial_t \tilde{\phi}_k - A_{ij}\partial_{ij}\tilde{\phi}_k + \psi \cdot D\tilde{\phi}_k \leq \tilde{\alpha} + H^*(x,\psi)
\end{equation}
in the sense of distributions on $(-2\epsilon,T+2\epsilon) \times \bb{T}^d$.

Let $\xi^1$ be a smooth convolution kernel in $\bb{R}^{d+1}$ with support in the unit ball, with $\xi^1 \geq 0$ and $\int \xi^1 = 1$.
Then define the standard mollifier sequence $\xi^\epsilon(t,x) = \epsilon^{-d-1}\xi^1((t,x)/\epsilon)$.
Set $\phi_k^\epsilon = \xi^\epsilon \star \tilde{\phi}_k$ and $\alpha^\epsilon = \xi^\epsilon \star \tilde \alpha$.
By taking the convolution we have, in a pointwise sense,
\begin{equation} \label{eq:subsol+commutators}
-\partial_t \phi_k^\epsilon - A_{ij}\partial_{ij}\phi_k^\epsilon + \psi \cdot D\phi_k^\epsilon \leq \alpha^\epsilon + \xi^\epsilon \star H^*(\cdot,\psi) + R_\epsilon^k - S_\epsilon^k
\end{equation}
on $[0,T] \times \bb{T}^d$, where
\begin{equation}
R_\epsilon^k := [\xi^\epsilon,A_{ij}\partial_{j}](\partial_i \tilde{\phi}_k), ~~ S_\epsilon^k := [\xi^\epsilon,\psi ](D\tilde{\phi}_k).
\end{equation}
Here we use the same commutator notation as in (\ref{eq:commutators}).
Invoking \cite[Lemma II.1]{DL89}, we have that $R_\epsilon^k$ and $S_\epsilon^k$, $k=1,2$ are smooth functions which converge to zero in $L^{r}$, since $A_{ij} \in W^{1,\infty}$ is given and $\psi$ may also be chosen in $W^{1,\infty}$.

Define $R_\epsilon := \max\{R_\epsilon^1 - S_\epsilon^1,R_\epsilon^2 - S_\epsilon^2\}$.
This, too, converges to zero in $L^r$.
Moreover, for $k=1,2$
\begin{equation} \label{eq:subsol+error}
-\partial_t \phi_k^\epsilon - A_{ij}\partial_{ij}\phi_k^\epsilon + \psi \cdot D\phi_k^\epsilon \leq \alpha^\epsilon + \xi^\epsilon \star H^*(\cdot,\psi) + R_\epsilon
\end{equation}
holds in a pointwise sense, hence also in a viscosity sense.
By standard results, (\ref{eq:subsol+error}) holds also for $\phi^\epsilon := \phi^\epsilon_1\vee \phi^\epsilon_2$ in a viscosity sense.
The result of \cite{Ish95} implies that it also holds in the sense of distributions, that is, for any smooth map $\zeta$ with support in $(0,T] \times \bb{T}^d$ we have
\begin{multline} \label{eq:subsolindist+error}
-\int_{\T^d}\zeta(T)\phi^\epsilon(T) + \int_0^T\int_{\T^d} \phi^\epsilon\partial_t\zeta+ \lg D\zeta, AD\phi^\epsilon\rg + \zeta(\partial_iA_{ij}\partial_j\phi^\epsilon +D\phi^\epsilon \cdot \psi) \\
\leq \int_0^T\int_{\T^d} \zeta(\alpha^\epsilon  + \xi^\epsilon \star H^*(\cdot,\psi) + R_\epsilon).
\end{multline}
By construction, $\phi^\epsilon(T) = \phi_T$ for all $\epsilon > 0$.
Observe that $\phi^\epsilon \to\bar \phi$ in $L^\gamma$ and $D\phi^\epsilon \to D\bar \phi$ in $L^r$, as these sequences are only slight adaptations of classical convolutions of $\bar \phi$ and $D\bar \phi$.
Finally, note that $\alpha^\epsilon \to\bar  \alpha$ in $L^p$, while $\xi^\epsilon \star H^*(\cdot,\psi) \to H^*(\cdot,\psi)$ uniformly.
Letting $\epsilon \to 0+$, we are left with
\begin{equation} \label{eq:0-estimate}
-\int_{\T^d}\zeta(T)\phi_T + \int_0^T\int_{\T^d} \bar \phi\partial_t\zeta+ \lg D\zeta, AD\bar \phi\rg + \zeta(\partial_iA_{ij}\partial_j\bar \phi +\psi \cdot D\bar \phi)
 \leq \int_0^T\int_{\T^d}  \zeta(\bar \alpha +  H^*(\cdot,\psi)).
\end{equation}
Now since $\psi$ is an arbitrary smooth vector field, we may take a sequence that approximates $\partial_p H(x,D\bar \phi)$ in $L^{r'}$.
By the convexity of $H(x,\cdot)$ this yields (\ref{eq:max-subsolution}), as desired.
\end{proof}

%%%%%%%%%%%%%%%%%%%%%%%%%%%%%%%%%%%%%%%%
%%%%%%%%%%%%%%%%%%%%%%%%%%%%%%%%%%%%%%%%
\subsection{Stability}

We now consider the stability of solutions with respect to  the data $A$, $H$ and $f$ and the data $m_0$ and $\phi_T$. More precisely, assume that $(A^n)$, $(H^n)$, $(f^n)$ $m_0^n$ and $\phi^n_T$ satisfy conditions (H1)$\dots$(H4) uniformly with respect to $n$ and converge to $A$, $H$, $f$, $m_0$ and $\phi_T$ locally uniformly.

\begin{Theorem}\label{thm:stab} Let $(\phi^n,m^n)$ be a weak solution of \eqref{MFG} associated with $A^n$, $H^n$,  $f^n$ and with the initial and terminal conditions $m_0^n$ and $\phi_T^n$. Assume also that the sequence $\phi^n$ is uniformly bounded below. Then $(m^n)$ converges strongly to $m$ in $L^q$ while $\phi^n$ converges weakly and up to a subsequence to a map $\bar \phi$ in $L^\gamma$, where the pair $(\bar \phi,\bar m)$ is a weak solution to \eqref{MFG}. 
\end{Theorem}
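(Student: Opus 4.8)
The plan is to argue by $\Gamma$-convergence, using the variational characterization of weak solutions established in Theorem~\ref{theo:main}. For each $n$, write $\mathcal A^n$, $\mathcal B^n$, $\mathcal K^n$, $\mathcal K_1^n$ for the functionals and admissible sets of Sections~\ref{sec:2opti}--\ref{sec:optiHJ} built from $(A^n,H^n,f^n,m_0^n,\phi_T^n)$, and $(F^n)^*$, $(H^n)^*$ for the corresponding Fenchel conjugates. Since $(\phi^n,m^n)$ is a weak solution, Theorem~\ref{theo:main} says that, setting $\alpha^n:=f^n(\cdot,m^n)$ and $w^n:=-m^n D_pH^n(\cdot,D\phi^n)$, the pair $(\phi^n,\alpha^n)$ minimizes \eqref{PB:dual-relaxed} for the $n$-th data, $(m^n,w^n)$ is the unique minimizer of \eqref{Pb:mw2}, and $\mathcal A^n(\phi^n,\alpha^n)=-\mathcal B^n(m^n,w^n)=\inf\mathcal A^n$. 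First I would derive uniform a priori bounds, following Step~1 of the proof of Proposition~\ref{Prop:existence}: testing $\mathcal A^n$ at the viscosity solution $\psi^n$ of $-\partial_t\psi-A^n_{ij}\partial_{ij}\psi+H^n(x,D\psi)=0$, $\psi(T,\cdot)=\phi_T^n$, bounds $\inf\mathcal A^n$ from above uniformly in $n$ (all relevant constants being uniform by hypothesis); integrating the Hamilton--Jacobi inequality against $m_0^n$ and using the coercivity of $H^n$ and of $(F^n)^*$ gives $(\alpha^n)$ bounded in $L^p$ and $(D\phi^n)$ in $L^r$; Theorem~\ref{thm:estiHJ} bounds $(\phi^n)$ in $L^\gamma\cap L^\infty((0,T);L^\eta(\T^d))$, the uniform lower bound on $\phi^n$ being used here; and the growth conditions \eqref{HypGrowthH}, \eqref{HypGrowthF} inserted in $\mathcal B^n$ bound $(m^n)$ in $L^q$ and $(w^n)$ in $L^{r'q/(r'+q-1)}$, exactly as in Lemma~\ref{Lem:dualite}.

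Along a subsequence we then have $m^n\rightharpoonup\bar m$ in $L^q$, $\phi^n\rightharpoonup\bar\phi$ in $L^\gamma$ with $D\phi^n\rightharpoonup D\bar\phi$ in $L^r$, $\alpha^n\rightharpoonup\bar\alpha$ in $L^p$, $w^n\rightharpoonup\bar w$ in $L^{r'q/(r'+q-1)}$; the limit data still satisfy (H1)--(H4), with a Lipschitz square root $\Sigma$ of $A$ obtained (up to a further subsequence) as a uniform limit of $(\Sigma^n)$. The Fokker--Planck equation passes to the limit at once, since the second-order derivatives fall on the smooth test function, so $(\bar m,\bar w)\in\mathcal K_1$; and since Lemma~\ref{lem:m-cts} provides a H\"older modulus for $t\mapsto m^n(t)$ in $(P(\T^d),\mathbf d_1)$ uniform in $n$, Ascoli gives $m^n\to\bar m$ in $C([0,T];P(\T^d))$, whence $\int_{\T^d}\phi_T^n m^n(T)\to\int_{\T^d}\phi_T\bar m(T)$. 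For the Hamilton--Jacobi inequality I would pass to the limit by duality, as in Section~\ref{subsec:uniq}: for a fixed smooth vector field $\psi$, convexity of $H^n$ gives $H^n(x,D\phi^n)\ge\psi\cdot D\phi^n-(H^n)^*(x,\psi)$; testing the $n$-th inequality against a nonnegative $\zeta$, passing to the limit (using $(H^n)^*\to H^*$ locally uniformly, $D\phi^n\rightharpoonup D\bar\phi$, and the factorization $A^n=\Sigma^n(\Sigma^n)^T$, $\Sigma^n\to\Sigma$), and then letting $\psi\to D_pH(\cdot,D\bar\phi)$ in $L^{r'}$, yields $-\partial_t\bar\phi-A_{ij}\partial_{ij}\bar\phi+H(x,D\bar\phi)\le\bar\alpha$ with $\bar\phi(T,\cdot)\le\phi_T$, so $(\bar\phi,\bar\alpha)\in\mathcal K$; here the first-order correction term $\zeta\,(\partial_iA^n_{ij})\partial_j\phi^n$ is the one requiring care, the rest being routine. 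The initial term $\int_{\T^d}\phi^n(0)m_0^n$ is controlled exactly as in Step~3 of the proof of Proposition~\ref{Prop:existence} (integrate against $m_0^n$, use the $L^\infty((0,T);L^\eta(\T^d))$ bound and the BV trace of Lemma~\ref{lem:intem0phiBV}, together with $m_0^n\to m_0$ uniformly), giving $\limsup_n\int_{\T^d}\phi^n(0)m_0^n\le\int_{\T^d}\bar\phi(0)m_0$.

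Combining these facts with the weak lower semicontinuity of the convex integrands $(F^n)^*$, $F^n$ and $(m,w)\mapsto m(H^n)^*(x,-w/m)$ (together with the locally uniform convergence of the integrands) yields $\mathcal A(\bar\phi,\bar\alpha)\le\liminf_n\mathcal A^n(\phi^n,\alpha^n)$ and $\mathcal B(\bar m,\bar w)\le\liminf_n\mathcal B^n(m^n,w^n)$. Since $\mathcal A^n(\phi^n,\alpha^n)=-\mathcal B^n(m^n,w^n)$, since $(\bar\phi,\bar\alpha)\in\mathcal K$ and $(\bar m,\bar w)\in\mathcal K_1$, and since $\inf\mathcal A=-\min\mathcal B$ for the limit problem by Lemma~\ref{Lem:dualite}, the chain
\[
\mathcal A(\bar\phi,\bar\alpha)\le\liminf_n\mathcal A^n(\phi^n,\alpha^n)=-\limsup_n\mathcal B^n(m^n,w^n)\le-\mathcal B(\bar m,\bar w)\le\inf\mathcal A,
\]
together with the trivial inequality $\inf\mathcal A\le\mathcal A(\bar\phi,\bar\alpha)$ which closes the loop, forces all these inequalities to be equalities. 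Hence $(\bar\phi,\bar\alpha)$ minimizes $\mathcal A$, $(\bar m,\bar w)$ minimizes $\mathcal B$, $\mathcal B^n(m^n,w^n)\to\min\mathcal B$, and by Theorem~\ref{theo:main} applied to the limit data $(\bar\phi,\bar m)$ is a weak solution of \eqref{MFG}, with $\bar w=-\bar m D_pH(\cdot,D\bar\phi)$ and $\bar\alpha=f(\cdot,\bar m)$. Since the minimizer of \eqref{Pb:mw2} is unique, $\bar m$ is independent of the subsequence, so the whole sequence $(m^n)$ converges weakly to $\bar m$ in $L^q$; on the other hand $\bar\phi$ is only determined on $\{\bar m>0\}$, which is why convergence of $(\phi^n)$ is asserted only up to a subsequence.

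Finally I would upgrade $m^n\rightharpoonup\bar m$ to strong convergence in $L^q$. From the equalities above, each term of $\mathcal B^n(m^n,w^n)$ converges to the corresponding term of $\mathcal B(\bar m,\bar w)$ (each being lower semicontinuous, their sum converging, and the terminal term converging separately); in particular $\int_0^T\int_{\T^d}F^n(x,m^n)\to\int_0^T\int_{\T^d}F(x,\bar m)$, and from the $\mathcal A$ side likewise $\int_0^T\int_{\T^d}(F^n)^*(x,\alpha^n)\to\int_0^T\int_{\T^d}F^*(x,\bar\alpha)$. Using the Fenchel equality $(F^n)^*(x,\alpha^n)+F^n(x,m^n)=\alpha^n m^n$ (valid because $\alpha^n=f^n(\cdot,m^n)$) and its limit analogue, one gets $\int_0^T\int_{\T^d}(\alpha^n-\bar\alpha)(m^n-\bar m)\to0$; splitting $\alpha^n-\bar\alpha=(f^n(x,m^n)-f^n(x,\bar m))+(f^n(x,\bar m)-f(x,\bar m))$ and noting that the second bracket times $(m^n-\bar m)$ tends to $0$ in $L^1$ (strong $L^p$ against weakly null $L^q$), the nonnegative quantity $(f^n(x,m^n)-f^n(x,\bar m))(m^n-\bar m)$ tends to $0$ in $L^1$, hence a.e.\ along a subsequence; the growth bound \eqref{Hypf} forces $m^n$ to stay bounded at such points, and then the strict monotonicity of $f^n(x,\cdot)$ together with $f^n\to f$ locally uniformly gives $m^n\to\bar m$ a.e. Since $F^n(x,m^n)+C_1\ge0$ converges a.e.\ to $F(x,\bar m)+C_1$ with convergence of integrals, Scheff\'e's lemma gives $F^n(\cdot,m^n)\to F(\cdot,\bar m)$ in $L^1$; then $|m^n-\bar m|^q\le 2^{q-1}\big(qC_1(F^n(x,m^n)+C_1)+|\bar m|^q\big)$ is dominated by an $L^1$-convergent sequence and tends to $0$ a.e., so $m^n\to\bar m$ in $L^q$ by the generalized dominated convergence theorem; by uniqueness of $\bar m$ the whole sequence converges. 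I expect the \emph{main obstacle} to be precisely this last step --- producing strong convergence of the densities from the convergence of the energies without any uniform quantitative convexity of the $F^n$, which is why one routes through the monotonicity of $f^n$ and Scheff\'e's lemma --- together with the passage to the limit in the fully nonlinear term $H^n(x,D\phi^n)$ and in the first-order term $(\partial_iA^n_{ij})\partial_j\phi^n$ under only weak convergence of $D\phi^n$.
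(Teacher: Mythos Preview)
Your proof is correct and rests on the same $\Gamma$-convergence philosophy as the paper, but the organization differs in one useful respect. The paper first extracts a weak limit of $(m^n,w^n)$ and, invoking the strict convexity of the limiting functional $\mathcal B$, upgrades immediately to \emph{strong} convergence $m^n\to\bar m$ in $L^q$; the growth condition on $f^n$ then gives $\alpha^n=f^n(\cdot,m^n)\to f(\cdot,\bar m)$ \emph{strongly} in $L^p$, after which the passage to the limit in the Hamilton--Jacobi inequality and the minimality of $(\bar\phi,\bar\alpha)$ reduce to the weak lower semicontinuity argument of Proposition~\ref{Prop:existence} verbatim---no linearization via $H^*$ is needed. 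You instead work both sides in parallel, pass to the limit in the HJ inequality by duality under only weak convergence of $\alpha^n$, close the chain $\mathcal A(\bar\phi,\bar\alpha)\le\liminf\mathcal A^n=-\limsup\mathcal B^n\le-\mathcal B(\bar m,\bar w)\le\inf\mathcal A$, and only afterwards upgrade the convergence of $m^n$ through the monotonicity of $f^n$, the Fenchel equality and Scheff\'e's lemma. Both routes are valid; the paper's ordering is more economical because the early strong convergence of $\alpha^n$ suppresses the delicacies you correctly flag (the linearization of $H^n$ and the first-order term $\zeta(\partial_iA^n_{ij})\partial_j\phi^n$, which is indeed the one place where the hypotheses on $A^n$ are used nontrivially), while your explicit argument for strong $L^q$ convergence is more self-contained since it does not rely on ``strict convexity implies strong convergence'' as a black box.
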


Note that the existence of a solution $(\phi^n,m^n)$, such that $\phi^n$ is bounded by below, is ensured by Theorem \ref{theo:mainex}.
% provided one has uniform bounds on $\ds \|\phi_T^n\|_{C^2}$.
 
The result is a simple consequence of Theorem \ref{theo:main} and of the $\Gamma-$convergence of the corresponding variational problems.  

\begin{proof} Let us set $w^n=-m^nD_pH_n(\cdot,D\phi^n))$ and $\alpha^n= f(\cdot,m^n)$. According to the second part of Theorem \ref{theo:main}, the pair $(m^n,w^n)$ is a minimizer of problem  \eqref{Pb:mw2} associated with $A^n$, $H^n$, $f^n$, $m_0^n$ and $\phi_T^n$, while the pair $(\phi^n,\alpha^n)$ is a minimizer of problem \eqref{PB:dual-relaxed} associated with the same data. Using the estimates established for the proof of Proposition \ref{prop:valeursegales}, we have 
\be\label{boundmnwn}
\|m^n\|_{L^q}+ \|w^n\|_{L^{\frac{r'q}{r'+q-1}}} \leq C. 
\ee
By lower semi-continuity of the functional $\mathcal B$, $(m^n,w^n)$ converge weakly up to a subsequence to to the minimum $(\bar m, \bar w)$ of the problem \eqref{Pb:mw2} associated with $A$, $H$, $f$, $m_0$ and $\phi_T$. The limit problem being strictly convex, the convergence actually holds 
strongly in $L^q\times L^{\frac{r'q}{r'+q-1}}$.  

Then the growth condition \eqref{Hypf} on $f$ implies that the sequence $(\alpha^n=f^n(\cdot,m^n))$  converges in $L^p$ to $\bar \alpha:= f(\cdot,\bar m)$. 
As $(\phi^n)$ is uniformly bounded below, Theorem \ref{thm:estiHJ} implies that 
$$
\|\phi^n\|_{L^\infty((0,T), L^{\eta}(\T^d))}+ \|\phi^n\|_{L^\gamma((0,T)\times \T^d)}\leq C.
$$
The end of the proof follows closely the argument in Proposition \ref{Prop:existence}:  $(D\phi^n)$ is bounded in $L^r$, so that, up to a subsequence, $(\phi^n)$ converges weakly to some $\bar \phi$ in $L^\gamma$ while $D\phi^n$ converges weakly to $D\phi$ in $L^r$ where $(\bar \phi,\bar \alpha)$ belongs to  ${\mathcal K}$. Moreover $(\bar \phi,\bar \alpha)$ is a minimizer of the relaxed problem \eqref{PB:dual-relaxed}. Theorem \ref{theo:main} then states that the pair $(\bar \phi,\bar m)$ is a solution to the MFG problem \eqref{MFG}. 

\end{proof}

%%%%%%%%%%%%%%%%%%%%%%%%%%%%%%%%%%%%%%%%%%%
%\subsection{Nonlocal approximation}
%
%%%%%%%%%%%%%%%%%%%%%%%%%%%%%%%%%%%%%%%%%%%

\end{document}